\documentclass[11pt]{article}

\usepackage{graphicx,epsfig,color}
\usepackage{amsmath,amssymb,amsthm}
\usepackage{bbm}
\usepackage{mathtools}
\usepackage{xr-hyper}
\newcommand{\R}{ \mathbb{R}}          
\newcommand{\N}{ \mathbb{N}}          
\def\V#1{{\boldsymbol{#1}}}           
\def\M#1{{\bf{#1}}}                   
\newcommand{\Tr}{\mathsf{T}}          
\newcommand{\Fourier}{\Spc F}          

\def\Spc#1{{\mathcal{#1}}}            
\def\Op#1{{\mathrm{#1}}}              
\def\ee{\mathrm{e}}                   
\def\jj{\mathrm{i}}                   
\newcommand{\dint}{{\rm d}}           
\def\Indic{\mathbbm{1}} 

\newcommand{\embedD}{\overset{d}{\hookrightarrow}} 
\newcommand{\embedC}{\overset{}{\hookrightarrow}} 
\newcommand{\eqdef}{:=}
\newcommand{\Lop}{\Op L}
\newcommand{\Dop}{\Op D}
\newcommand{\Identity}{\mathrm{Id}}
\newcommand{\bw}{\V \omega}

\theoremstyle{plain}
\newtheorem{theorem}{Theorem}[section]
\newtheorem{lemma}[theorem]{Lemma}

\newtheorem{proposition}[theorem]{Proposition}
\theoremstyle{definition}
\newtheorem{definition}[theorem]{Definition}

\theoremstyle{remark}
\newtheorem{remark}[theorem]{Remark}

\usepackage[hidelinks]{hyperref}
\begin{document}

\title{Splines and Operators:\\ Part II---Resolution of Inverse Problems}
\author{Michael Unser\thanks{Biomedical Imaging Group, \'Ecole polytechnique
f\'ed\'erale de Lausanne (EPFL), CH-1015 Lausanne, Switzerland
(michael.unser@epfl.ch).}}
\date{}
\maketitle

\begin{abstract}
We present a unified treatment of the data-fitting problems with quadratic (Hilbertian)
regularization that arise in approximation theory, machine learning, engineering, and
statistics. The generic problem is an optimization in the continuum: given $M$ linear
measurements of an unknown function, one minimizes a convex data-fidelity term that is
penalized by $\|\Lop f\|^2_{L_2}$, where $\Lop$ is a differential-like (spline-admissible)
operator. The natural domain of the minimization is the native space $\Spc H_\Lop$ of
$\Lop$---the largest Hilbert space over which the regularizer remains meaningful---for which we
provide a concise and directly usable characterization. Central to our formulation is a
measurement-adapted (conditioned) basis that decouples the null-space and spline components of
the solution. This basis serves in three places. First, it tightens the formulation of our representer
theorem, which confines the unique solution to an $M$-dimensional space without any
orthogonality constraint on the coefficients. Second, it streamlines the implementation, as the exact
discretization of the problem becomes an unconstrained convex program with closed-form
solutions in the least-squares case, which covers ridge regression, Tikhonov regularization,
and smoothing splines. Third, it makes the bridge with the minimum-mean-square-error estimation of
generalized Gaussian processes, which is the topic of a companion paper.
\end{abstract}
\pagebreak
\tableofcontents
\pagebreak
\bigskip

\section{Introduction}
\label{Sec:Introduction}

The recovery of a function---or, more generally, of a distribution---from a finite number of
linear measurements is the prototypical infinite-dimensional inverse problem. Since the task is intrinsically
ill-posed, the standard remedy is regularization: among all candidates that are consistent with
the data, one selects the one that minimizes a regularization functional. In this paper, the regularizer is
$\|\Lop f\|^2_{L_2}$, where $\Lop$ is a differential-like operator, and the corresponding
solutions are {\em generalized splines}. Specifically, let $\M y \in \R^M$ be the data that are
collected by the linear measurement functionals $\V \nu=(\nu_1,\dots,\nu_M)$. The generalized spline
fit of these data is then the solution of the constrained optimization problem
\begin{align}
\label{Eq:GsplineInt}
\min_{f \in \Spc H_\Op L} \|\Lop f\|_{L_2} \mbox{ s.t. } \V \nu(f)=\M y,
\end{align}
where $\Spc H_\Lop \subset \Spc S'$ is the so-called {\em native space} of $\Lop$; that
is, the largest space over which the regularizer remains meaningful. The most interesting (and also
more challenging) forms of the problem are the ones where $\Lop$ has a nontrivial null space
$\Spc N_{\Lop}=\Spc N_{\V p}$ that is spanned by a finite-dimensional basis
$\V p=(p_1,\dots,p_{N_0})$ with $p_n \in \Spc S'$ and $0 < N_0 <M$, the prototypical example
being the polynomials of degree less than $N_0$ for $\Lop=\Dop^{N_0}$.

Problem \eqref{Eq:GsplineInt} and its noise-tolerant variants lie at the intersection of several classical threads that have developed largely in parallel. The first is the variational theory of splines, which originates with Schoenberg \cite{Schoenberg1964}, was formalized by de Boor and Lynch \cite{deBoor1966} with the help of reproducing-kernel Hilbert spaces (RKHS) \cite{Aronszajn1950}, and was subsequently abstracted to general Hilbert spaces by the French school \cite{Anselone1968,Atteia1992}. This theory was extended to generalized measurement functionals with the Lg-splines of Jerome and Schumaker \cite{Jerome1969}, to the multidimensional setting with the thin-plate splines of Duchon \cite{Duchon1977}, and to noisy data with the smoothing splines of Wahba \cite{Wahba1990}; see \cite{Prenter:1975,Schumaker2007} for textbook accounts.
The second thread is statistical. The representer theorems of machine learning, from Kimeldorf and Wahba \cite{Kimeldorf1971} to their modern kernel-based \cite{Scholkopf2001,Hofmann2008,Berlinet2004} and measurement-adapted \cite{Gupta2018,Badoual2018,Unser_2020} variants, establish that the minimizers of regularized fitting problems are finite linear combinations of adapted basis functions. Their simplest finite-dimensional instances are ridge regression \cite{Hoerl1970} and Tikhonov regularization \cite{Tikhonov1977}, which are workhorses of statistics and inverse-problem theory. More recently, representer theorems have been established beyond the Hilbertian setting, both for sparsity-promoting regularization in Banach spaces \cite{Boyer2018,Unser2022,Bredies2018} and for the function spaces associated with neural networks \cite{Parhi2021b,Bartolucci2021}. The present paper remains within the quadratic framework, where stronger and fully explicit statements are available.
The third thread is approximation-theoretic. In the theory of radial basis functions, the native space is classically constructed as the completion of a span of kernel translates \cite{Schaback1999,Schaback2000,Wendland2005}, with conditionally positive-definite kernels used to accommodate polynomial null spaces \cite{Micchelli1986}. The closest point of contact with our operator-based approach is the Green-function construction of reproducing kernels for generalized Sobolev spaces \cite{Fasshauer2011}.

Our goal in this paper is threefold. First, we seek a unified treatment of this class of problems, which all
share a quadratic (Hilbertian) regularization: one theory, one solution structure, and one
algorithm. Second, we wish to identify the broadest possible class of infinite-dimensional
optimization problems whose solutions are confined to a finite-dimensional linear subspace.
This requires the determination of the acceptable regularization operators $\Lop$, of the space
over which the minimization is well-posed, of the admissible measurement functionals, and of
the losses that preserve the structure of the solution. Third, we aim at a common numerical
resolution pipeline: an exact discretization of the continuous-domain problem, with closed-form
solutions whenever the data-fidelity term is quadratic.

This paper is the second of a series. Part I \cite{Unser2026a} characterizes the operators $\Lop$ for which
the program is feasible ($\V p$-admissibility) and constructs the native space $\Spc H_\Lop$
abstractly, by completion of a space of test functions. The setting is that of a nuclear Gelfand
pair $(\Spc S,\Spc S')$, which yields the sandwich
$\Spc S \embedD \Spc H_\Lop \embedD \Spc S'$: the density of $\Spc S$ ensures universality ($\Spc
H_\Lop$ is rich enough to approximate any signal as closely as desired), while the continuous
embedding in $\Spc S'$ ensures well-posedness and provides the calculus of adjoints and Riesz
maps. The generalization is of the same nature as the passage from classical functions to
distributions; it covers all $\V p$-admissible operators at once, including fractional and
multidimensional ones. In the present Part II, we convert this abstract machinery into a
resolution theory for linear inverse problems. Let us state our contributions with the proper
caveat: the first two are consolidations and extensions---at the announced level of
generality---of results whose classical instances are well known, while the third one is, to
the best of our knowledge, genuinely new.
\begin{enumerate}
\item {\em A ready-to-use description of the native space}
(Theorem \ref{Theo:NativeSpace}): a self-contained description of $\Spc H_\Lop$---inner product,
direct-sum decomposition, embeddings, Riesz maps, and explicit representation of its
members in terms of $\Lop$ only. The generality of this structure is inherited from the
backbone nuclear space $\Spc S$: this abstract setting covers all $\V p$-admissible
operators, including the fractional and multidimensional ones. The description extends to
non-universal biorthogonal systems, which is precisely what the classical spline examples
require.
\item {\em An extension of the classic representer theorems of machine learning to their
maximal level of generality}
(Theorem \ref{Theo:GeneralRepSemiHilbert}): for a generic convex loss and arbitrary continuous
linear measurements over $\Spc H_\Lop$, the unique solution lies in an $M$-dimensional space
that is fixed by $(\Lop,\V \nu)$ alone. Interpolation, regression, Tikhonov regularization,
kernel estimation, and smoothing splines all arise as special cases, and each hypothesis of
the theorem answers one facet of the second goal.
\item {\em The measurement-adapted (conditioned) basis}
(Lemma \ref{Prop:DirectSum}): a biorthogonal system that is matched to the measurement
functionals and that separates the null-space and spline components of the solution once and
for all. This basis helps in three distinct places. It tightens the formulation of the
representer theorem, whose parametrization \eqref{Eq:f0lin2} is inherently stable---the basis
functions lie in $\Spc H_\Lop$---and free of the traditional orthogonality constraints on the
coefficients. It simplifies the implementation, because the exact discretization of
Section \ref{Sec:Discretization} then becomes an unconstrained convex program, with closed-form
and fully decoupled solutions in the least-squares case. 
\end{enumerate}

The paper is organized as follows. In Section \ref{Sec:Preliminaries}, we recall the primary
definitions and results of Part I: Hilbert--Gelfand spaces, biorthogonal systems, and
spline-admissible operators. In Section \ref{Sec:NativeSpace}, we establish the announced
characterization of the native space, together with a description of the range of the
conditional inverse (Proposition \ref{Prop:RangeA}). In Section \ref{Sec:InverseProblems}, we
develop the resolution theory: we establish the representer theorem for generalized variational
splines (Section \ref{Sec:VarSpline}), make the link with RKHS and conventional splines
(Section \ref{Sec:RKHS}), and derive the exact discretization together with its closed-form
solutions (Section \ref{Sec:Discretization}). 

Let us close this introduction with a pointer to the stochastic side of the story. Kimeldorf and
Wahba \cite{Kimeldorf1970} uncovered a correspondence between spline fitting and the Bayesian
estimation of Gaussian processes, which was further developed within the RKHS framework of
statistical estimation theory \cite{Parzen1961,Kailath1970,Weinert1974}; for modern accounts of
this deterministic--stochastic correspondence, we refer to \cite{Scheuerer2013,Kanagawa2018}. A companion paper
\cite{Unser2026c} extends this correspondence to the present level of generality: the
generalized splines derived here will be identified as minimum-mean-square-error estimators of
generalized Gaussian processes \cite{Unser2014book}, with the conditioned parametrization of
Theorem \ref{Theo:GeneralRepSemiHilbert} being precisely the ingredient that keeps the
stochastic counterpart well-defined.

\section{Preliminaries: Gelfand Pairs and Admissibility}
\label{Sec:Preliminaries}

Here, we briefly recall the primary definitions and theoretical outcomes of Part I \cite{Unser2026a}.
\subsection{Hilbert--Gelfand Spaces}
\begin{definition}[Nuclear Gelfand pair]
\label{Def:NuclearGelfandPair}
The couple $(\Spc S,\Spc S')$ that is formed by a topological vector space $\Spc S$ and
its topological dual $\Spc S'$ is called a \emph{nuclear Gelfand pair} if
\begin{enumerate}
\item $\Spc S$ is nuclear and Montel
\cite{Grothendieck1955,Pietsch1972,Treves2006};
\item there exists a continuous and dense embedding $\iota:\Spc S\to\Spc S'$ that is
canonical, in the sense that
$\langle\iota\varphi,\psi\rangle=\langle\iota\psi,\varphi\rangle$ for all
$\varphi,\psi\in\Spc S$.
\end{enumerate}
Unless stated otherwise, $\Spc S'$ is endowed with its weak-$\ast$ topology. 
\end{definition}

To make matters concrete, we may pick the pair $\big(\Spc S(\R^d),\Spc S'(\R^d)\big)$ where $\Spc S(\R^d)$ is the Schwartz space of smooth, rapidly decaying test functions on $\R^d$, and $\Spc S'(\R^d)$ is the space of tempered distributions. 
The corresponding canonical embedding
$\iota\colon\Spc S(\R^d)\hookrightarrow\Spc S'(\R^d)$ is specified as
\begin{align}
\label{Eq:CanonicalEmbedding}
\iota\varphi: \psi \mapsto \langle \iota\varphi,\psi\rangle \eqdef \int_{\R^d} \varphi(\V x)\,\psi(\V x)\dint\V x,%
\qquad \psi\in\Spc S(\R^d).
\end{align}
The latter enables us to identify a test function
$\varphi\in\Spc S(\R^d)$ with the tempered distribution
$\iota\varphi\in\Spc S'(\R^d)$; sometimes, we shall simplify the notation to $\varphi\in\Spc S'(\R^d)$ (respectively, $\varphi: \psi\mapsto \langle \varphi, \psi \rangle$) when the interpretation is unambiguous. 

A vector space $\Spc H$ is said to be a \emph{$\Spc S$-Hilbert--Gelfand space} if it contains $\Spc S$ and if it is continuously embedded in $\Spc S'$.
Since $\Spc S$ is dense in $\Spc S'$, the Hilbert-Gelfand property can be expressed as
\begin{align}
\label{Eq:HilbertGelfand}
\Spc S\embedD \Spc H\embedD\Spc S'
\end{align}
where the symbol ``$\embedD$'' denotes a continuous and dense embedding. The three properties of Hilbert--Gelfand spaces that are fundamental for our purpose are: 
\begin{enumerate}
\item The preservation of the property through duality; i.e., $\Spc S\embedD \Spc H'\embedD\Spc S'$, and the compatibility of the duality product for $(\Spc H,\Spc H')$ with that of $(\Spc S, \Spc S')$. Moreover, since $\Spc S\embedC \Spc H$, we have that
\begin{align}
\label{Eq:DualityBound}
|\langle \nu, \varphi \rangle|\le \|\nu\|_{\Spc H'} \|\varphi\|_{\Spc H}
\end{align}
for any $\nu \in \Spc H'\subset \Spc S'$ and $\varphi \in \Spc S$, and, by continuous extension, for $\varphi \in \Spc H$.
\item The possibility of a $\Spc S$-based construction by completion as
\begin{align}
\label{Eq:HilbertCompletion}
\Spc H=\overline{(\Spc S,\|\cdot\|_{\Spc H})},
\end{align}
where $\|\cdot\|_{\Spc H}$ is the Hilbertian norm for $\Spc H$.
\item The retrieval of the inner product as\begin{align}
\label{Eq:RieszPairing}
\forall f,g \in \Spc H:\quad \langle f,g \rangle_{\Spc H}=\langle \Op J_{\Spc H}f, g\rangle=\langle f,\Op J_{\Spc H}g\rangle, 
\end{align}
where $\Op J_{\Spc H}: \Spc H \to \Spc H'$ is the (unique) Riesz map that isometrically maps $\Spc H$ into its topological dual.
\end{enumerate}

The baseline example of a Hilbert--Gelfand space is the pivot space
\begin{align}
\label{Eq:L2}
L_2=\overline{(\Spc S,\|\cdot\|_{L_2})},\quad \|\varphi\|^2_{L_2}=\langle \varphi,\varphi \rangle:=\langle \iota \varphi,\varphi \rangle
\end{align}
whose existence is guaranteed by the second condition in Definition \ref{Def:NuclearGelfandPair}. For $\Spc S=\Spc S(\R^d)$, Eq.~\eqref{Eq:L2} yields
$L_2=L_2(\R^d)$, which coincides with Lebesgue's space of finite energy functions. The main point for us is that there is a ``canonical'' $L_2$ space associated with every nuclear Gelfand pair $(\Spc S, \Spc S')$.
\subsection{Vector-Functionals and Biorthogonal Systems}
\label{Sec:VecFunctional}
We use $\V \nu=(\nu_1,\dots,\nu_{M})$ to denote a collection of linear functionals with $\nu_m \in \Spc S'$ (or $\Spc H'$), $m=1, \dots, M$.
Under the implicit assumption that the $\nu_m$ are linearly independent, we then define: 
\begin{enumerate}
\item the linear subspace
$\Spc N_{\V \nu}={\rm span}\{\nu_m\}_{m=1}^{M} \subset \Spc S'$;
\item the linear vector-functional
\begin{align}
\V \nu: \varphi \mapsto \V \nu(\varphi)=\big (\langle \nu_1, \varphi\rangle,\dots, \langle \nu_{M}, \varphi\rangle\big):  \Spc S \to \R^{M};
\end{align}
\item the symmetric, finite-rank operator $\Op R_{\V \nu}: \Spc S \to \Spc N_{\V \nu}\subset \Spc S'$ (or  $\Op R_{\V \nu}: \Spc H \to \Spc N_{\V \nu}\subset \Spc H'$) with
\begin{align}
\Op R_\V \nu: \varphi \mapsto \Op R_\V \nu\{\varphi\}=\sum_{m=1}^M\nu_m \langle \nu_m, \varphi\rangle.%
\end{align}
The same formula is read in the reverse direction, as $\Op R_{\V \nu}: \Spc S' \to \Spc S$ (or
$\Op R_{\V \nu}: \Spc H' \to \Spc N_{\V \nu}\subset\Spc H$), whenever the members of the family
are regular; i.e., $\V \nu \subset \Spc S$ (respectively, $\V \nu \subset \Spc H$).
\end{enumerate}

We also briefly recall how the space $\Spc N_\V p$ induced by $\V p=(p_1,\cdots,p_{N_0}) \subset \Spc S'$ (e.g., the attainable null space of the operator $\Op L$) can be turned into a Hilbert space, the non-standard part being that the
$p_n$ (which are typically Taylor monomials) are not included in $L_2$, so that they cannot be orthogonalized. Our solution  
relies on some biorthogonal basis $\V \phi=(\phi_1,\dots,\phi_{N_0})$ (see Part I, Lemma~\ref{I-Lem:Biorthogonal}).
The latter is generally not unique, unless
one forces the $\phi_n$ to be part of $\Spc N_{\V p}$, which is only possible if $\Spc N_{\V p}\subset L_2$.

\begin{proposition}[Finite-dimensional Hilbert subspaces of $\Spc S'$]
\label{Prop:FiniteDimHilbert}
Let $\V p=(p_n)$ be a basis of $\Spc N_{\V p}={\rm span}\{p_n\}_{n=1}^{N_0}\subset \Spc S'$. Then, there exists
 a vector of functionals $\V \phi=(\phi_n)$ with $\phi_1,\dots,\phi_{N_0}\in \Spc S$ such that
 $\langle p_n, \phi_m\rangle = \delta_{m,n}$ (biorthogonality).
 
The resulting biorthogonal system $(\V p, \V \phi)$ is called {\em universal} and it allows us to identify the dual pair of 
Hilbert spaces  $\Spc H_0=(\Spc N_{\V p},|\cdot|_{\Op R_\V \phi}) \subset \Spc S'$ and
$\Spc H'_0=(\Spc N_{\V \phi},|\cdot|_{\Op R_\V p}) \subset \Spc S$ equipped with the (semi-)inner
products
\begin{align}
\langle f_1,f_2\rangle_{\Op R_\V \phi} = \sum_{n=1}^{N_0} \langle \phi_n,f_1 \rangle\langle \phi_n,f_2 \rangle\\ 
\langle \varphi_1,\varphi_2\rangle_{\Op R_\V p} = \sum_{n=1}^{N_0} \langle p_n,\varphi_1 \rangle \langle p_n,\varphi_2 \rangle,
\end{align}
which are well-defined for any $f_1,f_2 \in \Spc S'$ and $\varphi_1,\varphi_2 \in \Spc S$.
The same construction applies with any dual pair $(\Spc H,\Spc H')$ of Hilbert spaces such that
$\Spc S \embedC \Spc H \embedC \Spc S'$ in place of $(\Spc S,\Spc S')$, which enables the use of
{\em non-universal} biorthogonal systems.
\end{proposition}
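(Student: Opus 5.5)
The plan is to prove the existence of the biorthogonal family first, and then check the Hilbert-space assertions by direct computation.

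\emph{Existence of $\V\phi$.} Since $\Spc S$ is dense in $\Spc S'$ (weak-$\ast$), the map $T:\Spc S\to\R^{N_0}$, $\varphi\mapsto(\langle p_1,\varphi\rangle,\dots,\langle p_{N_0},\varphi\rangle)$, is onto. Otherwise its range, a proper subspace of $\R^{N_0}$, would be annihilated by some nonzero $\V a\in\R^{N_0}$. Then $\langle \sum_n a_n p_n,\varphi\rangle=0$ for all $\varphi\in\Spc S$. Because elements of $\Spc S'$ are determined by their action on $\Spc S$, this forces $\sum_n a_n p_n=0$, contradicting the linear independence of $\V p$. (This step uses only the duality, not density.) By surjectivity, we can pick $\psi_1,\dots,\psi_{N_0}\in\Spc S$ whose images $T\psi_m$ form a basis of $\R^{N_0}$. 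The Gram matrix $\M G=[\langle p_n,\psi_m\rangle]$ is then invertible, and we set $\phi_m=\sum_k (\M G^{-1})_{km}\psi_k$. A one-line check gives $\langle p_n,\phi_m\rangle=\delta_{m,n}$. This is the only step with real content; the rest is verification.

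\emph{Hilbert structure.} The bilinear forms $\langle f_1,f_2\rangle_{\Op R_{\V\phi}}$ and $\langle\varphi_1,\varphi_2\rangle_{\Op R_{\V p}}$ are well defined on all of $\Spc S'$ and $\Spc S$, since $\phi_n\in\Spc S$ and $p_n\in\Spc S'$. They are symmetric and positive semidefinite, hence semi-inner products. Restricted to $\Spc N_{\V p}$, write $f=\sum_n a_np_n$; biorthogonality gives $\langle\phi_n,f\rangle=a_n$. Thus $|f|^2_{\Op R_{\V\phi}}=\|\V a\|_2^2$, which is a genuine norm, and $\V p$ is an orthonormal basis. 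In the same way, for $\varphi=\sum_n b_n\phi_n\in\Spc N_{\V\phi}$ we get $\langle p_n,\varphi\rangle=b_n$, so $\V\phi$ is orthonormal in $\Spc H'_0$. Linear independence of $\V\phi$ itself also follows from biorthogonality. Both spaces are finite dimensional, hence complete.

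\emph{Duality and extension.} The pairing $\langle f,\varphi\rangle=\sum_n a_nb_n$ between $\Spc H_0$ and $\Spc H_0'$ is the Euclidean pairing of the coefficient vectors. It therefore identifies $\Spc H_0'$ isometrically with the dual of $\Spc H_0$, with Riesz map $p_n\mapsto\phi_n$. For the last claim, replace $(\Spc S,\Spc S')$ by $(\Spc H,\Spc H')$ with $p_n\in\Spc H$, and repeat the same argument with $\phi_n$ sought in $\Spc H'$. The injectivity argument still applies because $\Spc H'$ separates the points of $\Spc H$ (Hahn--Banach/Riesz). The only care required is to read every duality bracket in the correct pair, which is consistent by the compatibility property of Hilbert--Gelfand spaces.
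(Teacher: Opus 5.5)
Your proof is correct: linear independence of $\V p$ in $\Spc S'$ makes $\varphi\mapsto\V p(\varphi)$ surjective onto $\R^{N_0}$ (density is indeed not needed). Inverting the resulting Gram matrix yields $\V \phi\subset\Spc S$, and the Hilbert and duality claims reduce to Euclidean identities on coefficient vectors, with $\Op R_{\V\phi}\colon p_n\mapsto\phi_n$ as the Riesz map.

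The paper does not prove this proposition here but imports it from Part I, and your argument is the standard linear-algebra one for such a statement. In the extension, existence is already settled by the universal system, since $\Spc S$ lies in both $\Spc H$ and $\Spc H'$. The real content, which you do cover, is that the verification uses only biorthogonality, so it holds for any $\V\phi$ in the larger space. In that case $\langle\cdot,\cdot\rangle_{\Op R_{\V\phi}}$ is defined on the dual of that space rather than on all of $\Spc S'$.
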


Given a dual Hilbert--Gelfand pair of spaces $(\Spc H, \Spc H')$ and a choice of biorthogonal system $(\V p, \V \phi)$ with $\V p \subset \Spc H'$ and $\V \phi \subset \Spc H$, we can then use the formalism to specify the pair of oblique projectors
\begin{align}
\label{Eq:Projectors}
\Op P_{\V \phi}&\eqdef\Identity-\Op R_{\V \phi}\Op R_{\V p}: \Spc H \to \Spc H_{\V p}\eqdef\{\varphi \in \Spc H: \V p(\varphi)=\V 0\}\\
\Op P^\ast_{\V \phi}&\eqdef\Identity-\Op R_{\V p}\Op R_{\V \phi}: \Spc H' \to \Spc H'_{\V \phi}\eqdef\{f \in \Spc H': \V \phi(f)=\V 0\},
\end{align}
which are the adjoint of one another. Their complements $\Identity-\Op P_{\V \phi}=\Op R_{\V \phi}\Op R_{\V p}: \Spc H \to \Spc N_{\V \phi}$ and $\Identity-\Op P^\ast_{\V \phi}=\Op R_{\V p}\Op R_{\V \phi}: \Spc H' \to \Spc N_{\V p}$ are the projectors on the finite-dimensional components, which gives the two complementary splittings $\Spc H=\Spc H_{\V p}\oplus \Spc N_{\V \phi}$ and $\Spc H'=\Spc H'_{\V \phi}\oplus \Spc N_{\V p}$. This is the notation of Part I, where these projectors are constructed for $(\Spc H,\Spc H')=(\Spc S,\Spc S')$, with $\Spc S_{\V p}$ and $\Spc S'_{\V \phi}$ as ranges.
\subsection{Spline-Admissible Operators}
Throughout, $\Spc S_{\V p}=\{\varphi\in\Spc S:\V p(\varphi)=\V 0\}$ denotes the annihilator of
$\Spc N_{\V p}$ in $\Spc S$, which is the space $\Spc H_{\V p}$ of \eqref{Eq:Projectors} for
$\Spc H=\Spc S$.

\begin{definition}[Conditional positivity]
\label{Def:CondPositiveDef}
Let $(\Spc S, \Spc S')$ be a nuclear Gelfand pair and $\Spc N_{\V p}$ a
finite-dimensional subspace of $\Spc S'$ spanned by $\V p=(p_1,\dots,p_{N_0})$. Then, the continuous operator
$\Op A: \Spc S \to \Spc S'$ is said to be {\em $\V p$-conditionally positive-definite} if
\begin{align}
\label{Eq:CondPositiveDefinite}
\langle \Op A \varphi,\varphi\rangle > 0
\mbox{ for all }\varphi \in \Spc S_{\V p} \backslash\{0\}.
\end{align}
Likewise, $\Op B: \Spc S \to \Spc S'$ is said to be {\em positive-definite} if
\begin{align}
\label{Eq:PositiveDefinite}
\langle \Op B \varphi,\varphi\rangle > 0
\mbox{ for all }\varphi \in \Spc S \backslash\{0\}.
\end{align}
\end{definition}
The latter property is obviously stricter: the (unconditional) positive definiteness of an operator implies that it is self-adjoint (i.e., symmetric) and $\V p$-conditionally positive for any $\Spc N_\V p \subset \Spc S'$. 

For any pair of operators $(\Op A,\Op B)$ that meet the continuity and positivity conditions of Definition \ref{Def:CondPositiveDef}, it is then possible, by completion, to form the two
Hilbert spaces
\begin{align}
\label{Eq:HA} 
\Spc H_{\Op A}\eqdef\overline{(\Spc S_{\V p},|\cdot|_{\Op A})}\\%
\label{Eq:HB} 
\dot{\Spc H}_{\Op B}\eqdef\overline{(\Spc S,|\cdot|_{\Op B})},
\end{align}
taken in the abstract sense (equivalence classes of Cauchy sequences). Whether or not these spaces are genuine
subspaces of $\Spc S'$---as in the Hilbert--Gelfand configuration \eqref{Eq:HilbertGelfand}---will depend on the fundamental notion
of {\em attainable null space}.

\begin{definition}[Attainable null space]
\label{Def:AttainableNull}
Let $q$ be a continuous seminorm on $\Spc S$. Its \emph{attainable null space} is
\begin{align*}
\Spc N_q\eqdef\big\{p\in\Spc S':\ p=\textstyle\lim_{k}\chi_k\ \text{in}\ \Spc S'
\ \text{for some}\ (\chi_k)\subset\Spc S\ \text{with}\ q(\chi_k)\to0\big\}.
\end{align*}
For the seminorm induced by a positive-semi-definite operator $\Op B:\Spc S\to\Spc S'$, we write
$\Spc N_{\Op B}\eqdef\Spc N_{|\cdot|_{\Op B}}$.
\end{definition}
The above definition can also be extended to the conditional scenario of \eqref{Eq:HA} by
replacing $\Spc S$ by $\Spc S_\V p$. The main point is that the abstract Hilbert space
specified by \eqref{Eq:HB} \big(resp., by \eqref{Eq:HA}\big) is genuinely embedded in $\Spc S'$
if and only if its attainable null space is trivial; otherwise, the completion consists of
equivalence classes, which is what the dotted notation $\dot{\Spc H}_{\Op B}$ records. In the
configuration below, referred to as a $\V p$-conditional Riesz pair,
$\Spc N_{\Op B}=\Spc N_{\V p}$ is finite-dimensional, so that the primary space
$\Spc H_{\Op A}$ is a genuine Hilbert subspace of $\Spc S'$, while the \emph{homogeneous space}
$\dot{\Spc H}_{\Op B}\cong(\Spc H_\Op A)'$ has a concrete description as a subspace of the
quotient $\Spc S'/\Spc N_{\Op B}$, with $\pi:\Spc S'\to\Spc S'/\Spc N_{\Op B}$ the
corresponding quotient map. 

\begin{definition}[$\V p$-conditional Riesz pair]
\label{Def:Intrinsic}
Let $\Op A,\Op B:\Spc S\to\Spc S'$ be continuous, with $\Op A$ $\V p$-conditionally positive-definite and $\Op B$ positive-definite, so that the Hilbert spaces $\Spc H_{\Op A}$ and
$\dot{\Spc H}_{\Op B}$ of \eqref{Eq:HA}  and \eqref{Eq:HB}  are well defined. 
Then, the pair $(\Op A,\Op B)$ is a \emph{$\V p$-conditional Riesz pair} for $(\Spc S,\Spc S')$ if
\begin{enumerate}
\item[(i)] {\em Null-space matching}: $\Spc N_{\Op B}=\Spc N_{\V p}$; and
\item[(ii)] {\em Hilbert duality}: the map $\pi\,\Op A$ and the operator $\Op B$ extend to mutually inverse isometric
isomorphisms
\begin{align*}
\pi\,\Op A:\ \Spc H_{\Op A}\ \longrightarrow\ \dot{\Spc H}_{\Op B},
\qquad
\Op B:\ \dot{\Spc H}_{\Op B}\ \longrightarrow\ \Spc H_{\Op A}.
\end{align*}
\end{enumerate}
\end{definition}

\begin{definition}[Spline-admissible operator]
\label{Def:SplineAdmissible}
Let $(\Spc S,\Spc S')$ be a nuclear Gelfand pair. A continuous linear operator $\Op L: \Spc S \to L_2$ with {\em normal operator} $\Op B=\Op L^\ast \circ \Op L: \Spc S \to L_2 \to \Spc S'$
is said to be $\V p$-admissible if
\begin{enumerate}
\item There exists some operator $\Op A: \Spc S \to \Spc S'$ (the {\em conditional inverse} of the normal operator $\Op B$) such that $(\Op A, \Op B)$ is a $\V p$-conditional Riesz pair. 
\item The (homogeneous) Hilbert space $\dot{\Spc H}_\Op L=\overline{(\Spc S,\|\Op L \cdot\|_{L_2})}$ is isometrically isomorphic to $L_2$.
\end{enumerate}
\end{definition}
Since the first condition in Definition \ref{Def:SplineAdmissible} ensures that $\dot{\Spc H}_\Op L$ is isometrically isomorphic
to the (predual) Hilbert space 
\begin{align}
\label{Eq:NormA}
\Spc H_\Op A=\overline{(\Spc S_\V p,|\cdot|_{\Op A})}, \quad |\psi|_{\Op A}:=\sqrt{\langle \Op A \psi,\psi\rangle}\mbox{ with } \psi \in \Spc S_\V p,
\end{align}
the property in the second statement extends to that space as well. The corresponding isometry (bijection) is implemented by the adjoint operator $\Op L^\ast: L_2 \to \Spc H_\Op A\embedC \Spc S'$, which then also yields $L_2=\Op L^{\ast-1}(\Spc H_\Op A)$ where $\Op L^{\ast-1}$ is the stable inverse of $\Op L^\ast$ over its closed range $\Spc H_\Op A=\Op L^\ast(L_2)$.

When it exists, the {\em Green's operator} $\Op G: \Spc S \to L_2$ is the extension to
$\Spc S$ of the stable inverse ${\Op L^\ast}^{-1}\big|_{\Spc S_{\V p}}$, with the algebraic
property that $\Op L^\ast\Op G\{\varphi\}=\varphi$ for all $\varphi \in \Spc S$; on
$\Spc S_{\V p}$, this property is automatic. Its adjoint $\Op G^\ast=\Lop^{-1}: L_2 \to \Spc S'$
is then a right inverse of $\Lop$, which will resurface in the conditioned inverse
\eqref{Eq:gInverse} of Section \ref{Sec:NativeSpace}. Let us stress that no conditional
inverse of the normal operator is given by the naive composition $\Op G^\ast\Op G$, which
diverges on $\Spc S$ whenever $N_0>0$; the correct counterpart of the factorization
$\Op B=\Lop^\ast\Lop$ is the conditioned identity
$\Op A_{\V \phi}=\Op L^{-1}_{\V \phi}\Op L^{-1\ast}_{\V \phi}$ of \eqref{Eq:Factorization} in
Section \ref{Sec:NativeSpace}.

As for the primary isometry $\dot{\Spc H}_\Op L \to L_2$, it is implemented by the continuous extension of the operator $\Op L: \overline{(\Spc S,\|\Op L\cdot\|_{L_2})}=\dot{\Spc H}_\Op L \to L_2$. Since the elements of $\dot{\Spc H}_\Op L$ are equivalence classes, this requires that
$\Op L$ annihilates the null class $\Spc N_\V p=\Spc N_\Op B$. 
The complete characterization of the corresponding class of operators is provided by the following result of Part I (Theorem~\ref{I-Thm:PracticalChecklist}).

\begin{theorem}[Characterization of spline admissibility]
\label{Thm:PracticalChecklist}
Let $\Op L:\Spc S\to L_2$ be a continuous linear operator, and let
\begin{align*}
Q(\varphi)
\eqdef
\sup_{\psi\in\Spc S\backslash\{0\}}
\frac{|\langle\varphi,\psi\rangle|^2}{\|\Op L\psi\|_{L_2}^2}
\in[0,+\infty],
\qquad
\varphi\in\Spc S,
\end{align*}
denote the dual energy. The following four conditions are each necessary and, together,
sufficient for $\Op L$ to be $\V p$-admissible.
\begin{enumerate}
\item \emph{(Injectivity)} $\Op L$ is injective on $\Spc S$.
\item \emph{(Dense range)} $\Op L(\Spc S)$ is dense in $L_2$; equivalently,
$\ker(\Op L^\ast)=\{0\}$.
\item \emph{(Strict $\V p$-conditional positivity)}
\begin{align*}
Q(\varphi)<\infty
\quad\Longleftrightarrow\quad
\varphi\in\Spc S_{\V p}.
\end{align*}
\item \emph{(Sharpness)}
\begin{align*}
\|\Op L\psi\|_{L_2}
=\sup_{\varphi\in\Spc S_{\V p}\backslash\{0\}}
\frac{|\langle\psi,\varphi\rangle|}{\sqrt{Q(\varphi)}},
\qquad
\psi\in\Spc S.
\end{align*}
\end{enumerate}
When Conditions 1--4 hold, we have that $\Spc N_{\Op B}=\Spc N_{\V p}$ and that
$Q(\varphi)=|\varphi|^2_\Op A$ on $\Spc S_{\V p}$ for every admissible $\Op A$ in the norm
defined by \eqref{Eq:NormA}. 
\end{theorem}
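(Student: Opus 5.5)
We have to prove Theorem \ref{Thm:PracticalChecklist}: the four conditions are each necessary for $\V p$-admissibility and jointly sufficient, and under them $\Spc N_{\Op B}=\Spc N_{\V p}$ and $Q=|\cdot|^2_{\Op A}$ on $\Spc S_{\V p}$. The plan treats the two directions separately and builds everything around the dual energy $Q$. The guiding observation is that $Q(\varphi)$ is the squared dual norm of the functional $\psi\mapsto\langle\varphi,\psi\rangle$ on the homogeneous space $\dot{\Spc H}_\Op L=\overline{(\Spc S,\|\Op L\cdot\|_{L_2})}$. So $Q(\varphi)<\infty$ says exactly that $\varphi$ defines a bounded functional on $\dot{\Spc H}_\Op L$. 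Since $\Op L\psi\mapsto\psi$ is well defined once $\Op L$ is injective, $Q(\varphi)<\infty$ is also equivalent to $\varphi\in\Op L^\ast(L_2)$, with $Q(\varphi)=\min\{\|u\|^2_{L_2}:\Op L^\ast u=\varphi\}$.

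\textbf{Necessity.} Assume $\Op L$ is $\V p$-admissible, with conditional inverse $\Op A$.
\begin{enumerate}
\item If $\Op L\psi=0$ with $\psi\neq0$, then $\langle\Op B\psi,\psi\rangle=0$, which contradicts the positive-definiteness of $\Op B$. Hence $\Op L$ is injective.
\item By Condition 2 of Definition \ref{Def:SplineAdmissible}, the extension of $\Op L$ is an isometric isomorphism $\dot{\Spc H}_\Op L\to L_2$. Its range contains $\Op L(\Spc S)$ as a dense subset, which gives dense range. Equivalently, $\ker\Op L^\ast=\{0\}$.
\item For $\varphi\in\Spc S_{\V p}$, Hilbert duality gives $\langle\varphi,\psi\rangle=\langle\varphi,\Op B\Op A\varphi\rangle$ modulo $\Spc N_{\V p}$. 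Since $\varphi$ annihilates $\Spc N_{\V p}$, Cauchy--Schwarz in $\dot{\Spc H}_\Op B$ yields $Q(\varphi)\le|\varphi|^2_{\Op A}<\infty$, and the choice $\psi\to$ a representative of $\Op A\varphi$ gives equality.
\item Conversely, suppose $\varphi\notin\Spc S_{\V p}$, so that $\langle p_n,\varphi\rangle\neq0$ for some $n$. Null-space matching provides $\chi_k\in\Spc S$ with $\chi_k\to p_n$ in $\Spc S'$ and $\|\Op L\chi_k\|\to0$. Then $\langle\varphi,\chi_k\rangle\to\langle p_n,\varphi\rangle\neq0$ while $\|\Op L\chi_k\|\to0$, so $Q(\varphi)=\infty$.
\item Sharpness is the dual statement: it is the isometry $\Op B:\dot{\Spc H}_\Op B\to\Spc H_\Op A$, read through the Riesz representation. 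The norm of $\pi\psi$ equals the supremum over the dense subspace $\Spc S_{\V p}$ of $\Spc H_\Op A$, taken with respect to $|\cdot|_{\Op A}=\sqrt Q$.
\end{enumerate}

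\textbf{Sufficiency.} Assume Conditions 1--4.
\begin{enumerate}
\item By Condition 3, $\sqrt Q$ is a finite seminorm on $\Spc S_{\V p}$. It is a norm: if $Q(\varphi)=0$, then $\langle\varphi,\psi\rangle=0$ for all $\psi\in\Spc S$, so $\varphi=0$ by the density of the canonical embedding.
\item The parallelogram law holds for $Q$, because $Q$ is the restriction of a Hilbert dual norm. Its polarization defines a symmetric bilinear form on $\Spc S_{\V p}$.
\item The step I expect to be the \textbf{main obstacle} is producing a \emph{continuous} operator $\Op A:\Spc S\to\Spc S'$ with $\langle\Op A\varphi,\varphi\rangle=Q(\varphi)$ on $\Spc S_{\V p}$. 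The form $Q$ is lower semicontinuous, as a supremum of continuous functions. Since $\Spc S$ is nuclear and Montel, hence barrelled, a barrel argument should upgrade $\sqrt Q$ to a continuous seminorm on $\Spc S_{\V p}$. The Schwartz kernel theorem then turns the bilinear form into a continuous operator $\Op A_0:\Spc S_{\V p}\to\Spc S'$.
\item Extend $\Op A_0$ to all of $\Spc S$ through the projector $\Op P_{\V\phi}$ of \eqref{Eq:Projectors}, taking $\Op A=\Op P^\ast_{\V\phi}\Op A_0\Op P_{\V\phi}$ for a universal biorthogonal system from Proposition \ref{Prop:FiniteDimHilbert}.
\end{enumerate}

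\textbf{Riesz-pair properties.} It remains to verify null-space matching and Hilbert duality.
\begin{enumerate}
\item For $\Spc N_{\Op B}\supseteq\Spc N_{\V p}$, let $p\in\Spc N_{\V p}$. By Condition 4, $\psi\mapsto\|\Op L\psi\|$ is the dual norm relative to $\Spc H_\Op A$. Hence we can approximate $p$ by $\chi_k\in\Spc S$ with $\sup_{\varphi}|\langle\chi_k,\varphi\rangle|/\sqrt{Q(\varphi)}\to0$; such $\chi_k$ are found via Hahn--Banach and the density of $\Spc S$ in $\Spc S'$.
\item For $\Spc N_{\Op B}\subseteq\Spc N_{\V p}$, suppose $\chi_k\to p$ with $\|\Op L\chi_k\|\to0$. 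Then $\langle p,\varphi\rangle=\lim\langle\chi_k,\varphi\rangle=0$ for every $\varphi$ with $Q(\varphi)<\infty$, that is, for every $\varphi\in\Spc S_{\V p}$. Therefore $p\in(\Spc S_{\V p})^\perp=\Spc N_{\V p}$.
\item Hilbert duality follows from Condition 4, which identifies $\dot{\Spc H}_\Op B$ isometrically with $(\Spc H_\Op A)'$. The Riesz maps then give mutually inverse isometries $\pi\Op A$ and $\Op B$.
\item Finally, $\dot{\Spc H}_\Op L\cong L_2$ follows from injectivity, which makes the extended $\Op L$ well defined on classes, combined with its isometry and dense range.
\end{enumerate}
The identity $Q=|\cdot|^2_\Op A$ for every admissible $\Op A$ was already obtained in the necessity part.
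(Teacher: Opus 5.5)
\textbf{Verdict: genuine gap in the sufficiency half.} Your necessity half is essentially sound. The sufficiency half fails at the inclusion $\Spc N_{\V p}\subseteq\Spc N_{\Op B}$.

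\textbf{Where the gap is.} The implication $Q(\varphi)<\infty\Rightarrow\varphi\in\Spc S_{\V p}$ (the ``only if'' half of Condition 3) is never used in your sufficiency argument. Step 1 and the inclusion $\Spc N_{\Op B}\subseteq\Spc N_{\V p}$ only need the converse implication. The inclusion $\Spc N_{\V p}\subseteq\Spc N_{\Op B}$ is attributed to Condition 4, Hahn--Banach, and the density of $\Spc S$ in $\Spc S'$, and these cannot suffice.

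\textbf{Counterexample.} Take $\Spc S=\Spc S(\R)$, $\Lop=\Dop$, and $\V p=(1,x)$.
\begin{itemize}
\item For $\int\varphi=0$, one has $\varphi=\Dop^\ast u$ with $u=-\int_{-\infty}^x\varphi\in\Spc S$ and $Q(\varphi)=\|u\|^2_{L_2}$.
\item As $\varphi$ ranges over $\Spc S_{(1,x)}$, $u$ ranges over all zero-mean Schwartz functions, which are dense in $L_2$.
\item Hence Conditions 1, 2, 4 and the implication $\varphi\in\Spc S_{\V p}\Rightarrow Q(\varphi)<\infty$ all hold.
\item Yet $x\notin\Spc N_{\Op B}$. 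Pick $\varphi_0\in\Spc S$ with $\int\varphi_0=0\neq\int x\varphi_0(x)\dint x$. Any $\chi_k\to x$ in $\Spc S'$ with $\|\Dop\chi_k\|_{L_2}\to0$ would give $\langle\chi_k,\varphi_0\rangle\to\langle x,\varphi_0\rangle\neq0$. But $|\langle\chi_k,\varphi_0\rangle|\le\sqrt{Q(\varphi_0)}\,\|\Dop\chi_k\|_{L_2}\to0$.
\end{itemize}
So $\Dop$ is not $(1,x)$-admissible, although your argument as written would conclude that it is. This obstruction is the mirror image of your necessity item 4.

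\textbf{The missing idea.} Attainability of $\Spc N_{\V p}$ is exactly what the ``only if'' half of Condition 3 delivers, through a separation argument in $\R^{N_0}$.
\begin{itemize}
\item Write $\eta=\Op P_{\V\phi}\eta+\sum_n\phi_n\langle p_n,\eta\rangle$ and use $|\langle\chi,\varphi\rangle|\le\sqrt{Q(\varphi)}\,\|\Lop\chi\|_{L_2}$ for $\varphi\in\Spc S_{\V p}$. It then suffices to produce $\chi_k\in\Spc S$ with $\V\phi(\chi_k)=\V\phi(p)$ and $\|\Lop\chi_k\|_{L_2}\le 1/k$.
\item Define $s(\M c)=\inf\{\|\Lop\chi\|_{L_2}:\chi\in\Spc S,\ \V\phi(\chi)=\M c\}$. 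This is a finite seminorm on $\R^{N_0}$, because $\V\phi:\Spc S\to\R^{N_0}$ is onto.
\item Suppose $s\not\equiv0$. Hahn--Banach in $\R^{N_0}$ yields $\M d\neq\M 0$ with $|\M d^\Tr\M c|\le s(\M c)$ for all $\M c$.
\item Then $\varphi=\sum_n d_n\phi_n\in\Spc S$ satisfies $Q(\varphi)\le1$ and $\V p(\varphi)=\M d\neq\M 0$, contradicting Condition 3.
\item Hence $s\equiv0$, and the $\chi_k$ exist.
\end{itemize}
With this step repaired, the rest of your outline can be completed along the lines you indicate.

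\textbf{Minor point.} The continuity step you flag as the main obstacle is routine. Apply the barrel argument on $\Spc S$ itself to the finite, lower-semicontinuous seminorm $\varphi\mapsto\sqrt{Q(\Op P_{\V\phi}\varphi)}$. This avoids having to argue that the subspace $\Spc S_{\V p}$ is barrelled.
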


Ultimately, Conditions 3 and 4 say that $\|\Op L\psi\|_{L_2}$ and
$\|{\Op L^\ast}^{-1}\varphi\|_{L_2}$ are dual norms for
$(\psi,\varphi) \in \Spc S \times \Spc S_\V p$.

\section{Native Space for $\Op L$}
\label{Sec:NativeSpace}

Let $\Lop$ be $\V p$-admissible in the sense of Definition \ref{Def:SplineAdmissible} 
and $(\Op A,\Op B)$, with $\Op B=\Lop^\ast\Lop$, the underlying $\V p$-conditional Riesz pair. Given some
biorthogonal system $(\V p, \V \phi)$ with $\V \phi \subset \Spc S$ and the projectors specified in Section \ref{Sec:VecFunctional}, we then construct the corresponding {\em
Gram operator}
\begin{align}
\label{Eq:Aphi}
\Op A_{\V \phi}\eqdef\Op P^\ast_{\V \phi}\Op A\Op P_{\V \phi}: \Spc S \to \Spc S'
\end{align}
which is symmetric and $\V p$-conditionally positive.
Part I \cite{Unser2026a} provides two
complementary descriptions of the induced Hilbert structure. The first one is the
\emph{augmented pair}
\begin{align}
\label{Eq:Aplus}
\Op A_+\eqdef \Op A_{\V \phi}+\Op R_{\V p},
\qquad
\Op B_+\eqdef \Op B+\Op R_{\V \phi},
\end{align}
which is a compatible Riesz pair for $(\Spc S,\Spc S')$ (Part I, Theorem~\ref{I-Thm:Augmentation}).
It generates the dual pair of Hilbert--Gelfand spaces
\begin{align}
\label{Eq:AugmentedPair}
\Spc H=\overline{(\Spc S,|\cdot|_{\Op A_+})}=\Spc H_{\Op A}\oplus \Spc N_{\V \phi},
\qquad
\Spc H'=\overline{(\Spc S,|\cdot|_{\Op B_+})}=\Spc H'_{\Op A}\oplus \Spc N_{\V p}.
\end{align}
The native space is the second one:
$\Spc H_\Lop=\Spc H'$. In particular, the dictionary between the notation of Part I and the one
used here is $\Spc H_{\Lop,\V \phi}=\Spc H'_{\Op A}$ and
$\Spc H'_{\Lop,\V \phi}=\Spc H_{\Op A}\eqdef\Lop^\ast(L_2)$, with the prime being always attached to
the dual of the space that carries the same subscript. 

The second description is the $L_2$-isometry induced by the factorization of the Gram operator as
\begin{align}
\label{Eq:Factorization}
\Op A_{\V \phi}=\Op L^{-1}_{\V \phi}\circ \Op L^{-1\ast}_{\V \phi}: \Spc S \to L_2 \to \Spc S',
\end{align}
which is the exact dual of the factorization $\Op B=\Lop^\ast\Lop$ of the normal operator.
The underlying operator there is the \emph{conditioned inverse}
\begin{align}
\label{Eq:gInverse}
\Op L^{-1}_{\V \phi}\eqdef \Op A_{\V \phi}\Lop^\ast=\Op P^\ast_{\V \phi}\Op G^\ast=(\Identity-\Op R_{\V p}\Op R_{\V \phi})\Op G^\ast: L_2 \to \Spc H'_{\Op A}
\end{align}
of Part I (Proposition~\ref{I-Prop:Factorization} and Corollary~\ref{I-Cor:GreenFactor}), where
$\Op G^\ast=\Op L^{-1}$ is the adjoint of the Green's operator $\Op G$ of
Section \ref{Sec:Preliminaries}; the last two expressions require the existence of that
operator, while the first one does not. The operator $\Op L^{-1}_{\V \phi}$ is the unique right-inverse of $\Lop$ that imposes
the boundary condition $\V \phi(f)=\V 0$.

Theorem \ref{Theo:NativeSpace} below combines these two descriptions into a single comprehensive characterization of $\Spc H_\Op L$.
The gain is twofold: the resulting description of $\Spc H_\Lop$ is directly usable in
applications, as it involves $\Lop$, $\Lop^\ast$ and $\Op L^{-1}_{\V \phi}$ only; and it holds for
a much broader range of biorthogonal systems, the requirement $\V \phi \subset \Spc S$ of
Part I being relaxed into $\V \phi \subset \Spc H'_\Lop$. 

\begin{theorem}[Native space associated with an admissible operator $\Lop$]
\label{Theo:NativeSpace}
A $\V p$-admissible operator $\Lop: \Spc S \to L_2$ admits a continuous extension
$\Lop: \Spc H_\Lop \to L_2$ that specifies its native space $\Spc H_\Lop$.
This
space has the following properties.
\begin{enumerate}
\item $\Spc H_\Lop$  is the Hilbert space induced by the inner product
\begin{align}
\label{Eq:InnerProd}\langle f, g\rangle_{\Spc H_\Lop}=\langle \Lop f, \Lop g \rangle_{L_2}+\sum_{n=1}^{N_0} \langle \phi_n,f\rangle\langle \phi_n,g\rangle
\end{align}
where $(\V \phi,\V p)$ is a biorthogonal system for $\Spc N_\V p$ with $\V \phi \subset \Spc S$ (universality); Item \ref{Item:NormEquiv} relaxes that condition.
\item \label{Item:DirectSum}Direct-sum decomposition: $\Spc H_\Lop=\Spc H_{\Lop, \V \phi} \oplus \Spc N_\V p$ with
\begin{align}
\Spc H_{\Lop, \V \phi}&=\{f \in \Spc H_\Lop: \V \phi(f)=\V 0\}=\Lop_{\V \phi}^{-1}(L_2)\\
\Spc N_\V p&=\{f \in \Spc H_\Lop: \Lop f=0\}={\rm span}\{p_n\}_{n=1}^{N_0}.
\end{align}
The primary Hilbert space $\Spc H_{\Lop, \V \phi}$ with
$\|f\|_{\Spc H_{\Lop, \V \phi}}=\|\Lop f\|_{L_2}$ is isomorphic to $L_2=\Lop(\Spc H_{\Lop, \V \phi})$, while $\Spc N_\V p$ is the effective null space of $\Lop$.
\item Continuous and dense embeddings (Hilbert--Gelfand property): $\Spc S \embedD \Spc H_\Lop \embedD  \Spc S'$. In particular, this enables us to specify $\Spc H_\Lop$ as $\Spc H_{\Lop}=\overline{(\Spc S,\|\cdot\|_{\Spc H_\Lop} )}$.
\item \label{Item:NormEquiv}Equivalent norms and topologies: As a set, $\Spc H_\Lop$ is independent of the choice of biorthogonal system $(\V \phi, \V p)$ subject to the constraint $\phi_1, \cdots,\phi_{N_0} \in \Spc H'_\Lop$. The norms induced by such systems are 
all topologically equivalent.

\item \label{Item:ExplicitRep} Explicit representation: For any $f \in \Spc H_\Lop$, there exist some unique $w \in L_2$, $f^\ast \in \Spc H'_{\Lop, \V \phi}=\Lop^\ast(L_2)$ and $p \in \Spc N_{\V p}$ such that
\begin{align}
\label{Eq:Sumdecomp1}
f&=\Op A_{\V \phi} \{f^\ast \}+ p=\Op L_{\V \phi}^{-1}\{ w\} + p
\end{align}
with $w=\Lop f$, $f^\ast=(\Lop^\ast\Lop)\{ f\}$ and $p=\Op R_{\V p}\Op R_{\V \phi}\{f\}$ where the linear operators 
$\Op L_{\V \phi}^{-1}: L_2 \to \Spc H_\Lop$ and $\Op A_{\V \phi}
: \Spc H'_{\Lop}\to \Spc H_\Lop$ are defined by \eqref{Eq:gInverse} and \eqref{Eq:Aphi}, respectively. 
\item \label{Item:RieszMaps}Explicit Riesz maps:
\begin{align}
\Op J_{\Spc H_\Lop}&=\Lop^\ast \Lop + \Op R_{\V \phi}: \Spc H_\Lop \to \Spc H_\Lop'\\
\Op J_{\Spc H'_\Lop}&=\Op J_{\Spc H_\Lop}^{-1}=\Op A_{\V \phi} + \Op R_{\V p}: \Spc H'_\Lop \to \Spc H_\Lop\\
\Op J_{\Spc H_\Op A}&=\Op A_{\V \phi}: \Spc H_\Op A \to \Spc H'_\Op A=\Spc H_{\Lop,\V \phi}.
\end{align}
\item Homogeneous part of the space:
\begin{align}
\dot{\Spc H}_{\Op B}=\overline{(\Spc S,\|\Op L\cdot\|_{L_2}) }=\Spc H_\Lop/\Spc N_\V p\cong \Spc H_{\Lop, \V \phi}.\end{align}
This quotient space is isometric to $\Spc H'_{\Lop,\V \phi}=\Spc H_\Op A=\overline{(\Spc S_\V p,|\cdot|_{\Op A}) }$ with the corresponding Riesz map being $\pi \Op A=\pi \Op A_{\V \phi}: \Spc H_\Op A \to \dot{\Spc H}_{\Op B}$. Unlike $\Spc H_{\Lop, \V \phi}$, the structure of $\dot{\Spc H}_{\Op B}=\Spc H_\Lop/\Spc N_\V p$ and $\Spc H_\Op A=(\Spc H_\Lop/\Spc N_\V p)'$ (core) is independent of $\V \phi$.
\end{enumerate}
\end{theorem}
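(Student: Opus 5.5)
The plan is to start from the augmented Riesz pair $(\Op A_+,\Op B_+)$ of Part I, with $\V\phi\subset\Spc S$, and read off most items. By Part I (Theorem on augmentation), $\Spc H_\Lop=\overline{(\Spc S,|\cdot|_{\Op B_+})}$ is a Hilbert--Gelfand space with $\Spc S\embedD\Spc H_\Lop\embedD\Spc S'$, which is Item 3. For $\varphi\in\Spc S$ we have $\langle\Op B_+\varphi,\varphi\rangle=\|\Lop\varphi\|_{L_2}^2+\sum_n\langle\phi_n,\varphi\rangle^2$. Hence $\|\Lop\varphi\|_{L_2}\le\|\varphi\|_{\Spc H_\Lop}$, and $\Lop$ extends by density to a contraction $\Spc H_\Lop\to L_2$. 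Polarization then gives \eqref{Eq:InnerProd} on $\Spc S$ and, by continuity, on $\Spc H_\Lop$; this is Item 1. The Riesz map $\Op J_{\Spc H_\Lop}$ is the continuous extension of $\Op B_+=\Lop^\ast\Lop+\Op R_{\V\phi}$, because \eqref{Eq:RieszPairing} holds on the dense subspace $\Spc S$. Its inverse is the extension of $\Op A_+=\Op A_{\V\phi}+\Op R_{\V p}$, since the pair is a compatible Riesz pair. Restricting to $\Spc H_\Op A$, on which $\Op R_{\V p}$ vanishes, gives $\Op J_{\Spc H_\Op A}=\Op A_{\V\phi}$. This settles Item 6.

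For Items 2 and 5, I would use the oblique projectors. The functionals $\phi_n$ are continuous on $\Spc H_\Lop$. Therefore $\Op R_{\V p}\Op R_{\V\phi}$ is a bounded projector onto $\Spc N_{\V p}$, with complement $\Spc H_{\Lop,\V\phi}=\ker\V\phi$, and this gives the direct sum.

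Next, $\Spc N_{\V p}\subset\ker\Lop$, because $\Spc N_{\Op B}=\Spc N_{\V p}$ is the attainable null space of $\|\Lop\cdot\|$. Conversely, if $\Lop f=0$ and $\V\phi(f)=0$, then $\|f\|_{\Spc H_\Lop}=0$. So $\ker\Lop=\Spc N_{\V p}$, and on $\Spc H_{\Lop,\V\phi}$ the norm reduces to $\|\Lop f\|_{L_2}$. Hence $\Lop$ is an isometry of $\Spc H_{\Lop,\V\phi}$ onto a closed subspace of $L_2$. That subspace contains $\Lop(\Spc S_{\V\phi})$, where $\Spc S_{\V\phi}=\Op P_{\V\phi}$-images of $\Spc S$ and $\Lop\Op P_{\V\phi}=\Lop$ on $\Spc S$. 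It is therefore dense by Theorem \ref{Thm:PracticalChecklist} (Condition 2), hence all of $L_2$.

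The conditioned inverse $\Op L^{-1}_{\V\phi}$ is a right inverse of $\Lop$ that takes values in $\ker\V\phi$. Consequently $\Op L^{-1}_{\V\phi}=(\Lop|_{\Spc H_{\Lop,\V\phi}})^{-1}$, and $\Spc H_{\Lop,\V\phi}=\Op L^{-1}_{\V\phi}(L_2)$. The representation $f=\Op L^{-1}_{\V\phi}\Lop f+\Op R_{\V p}\Op R_{\V\phi}f$ follows, and uniqueness comes from the direct sum. For the second form I would set $f^\ast=\Lop^\ast w\in\Lop^\ast(L_2)=\Spc H_\Op A$ and use $\Op L^{-1}_{\V\phi}=\Op A_{\V\phi}\Lop^\ast$ from \eqref{Eq:gInverse}. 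This gives $f^\ast=\Lop^\ast\Lop f$, and $f^\ast$ is unique because $\Lop^\ast$ is injective.

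Item 7 then follows from Item 2 together with Definition \ref{Def:Intrinsic}(ii). The quotient map identifies $\Spc H_\Lop/\Spc N_{\V p}$ with $\Spc H_{\Lop,\V\phi}$, isometrically for $\|\Lop\cdot\|$, and this quotient coincides with $\dot{\Spc H}_{\Op B}$ by density of $\Spc S$. The $\V\phi$-independence holds because neither the seminorm $\|\Lop\cdot\|$ nor $\Spc N_{\V p}$ involves $\V\phi$.

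The main obstacle is Item 4, the passage to non-universal $\V\phi\subset\Spc H'_\Lop$, where the universal construction of Part I is no longer directly available. I would fix a universal $\V\phi^0\subset\Spc S$ and the space $\Spc H_\Lop$ it defines. Given another biorthogonal $\V\phi\subset\Spc H'_\Lop$, the quantity $\sum_n\langle\phi_n,f\rangle^2$ is continuous on $\Spc H_\Lop$ by \eqref{Eq:DualityBound}. Hence the new norm $\|\cdot\|_{\V\phi}$ is bounded by $C\|\cdot\|_{\V\phi^0}$.

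For the reverse bound, I would decompose $f=g+p$ with $g\in\Spc H_{\Lop,\V\phi^0}$ and $p\in\Spc N_{\V p}$, and use biorthogonality. This gives $\V\phi(f)=\V\phi(g)+\V\phi^0(p)$, with $|\V\phi(g)|\le C\|\Lop g\|$, so $|\V\phi^0(f)|=|\V\phi^0(p)|\le|\V\phi(f)|+C\|\Lop f\|$. Thus $\|f\|_{\V\phi^0}\le C'\|f\|_{\V\phi}$. The norms are therefore equivalent on the same set, and all properties proved above transfer, with the direct sum and $\Op L^{-1}_{\V\phi}$ defined through $\ker\V\phi$. The delicate point to check is that the projector identities of Section \ref{Sec:VecFunctional} remain valid in $(\Spc H,\Spc H')=(\Spc H_\Lop,\Spc H'_\Lop)$, as permitted by Proposition \ref{Prop:FiniteDimHilbert}.
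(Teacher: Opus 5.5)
Your proposal is correct. For Items 1, 3, 6 and 7 it follows the paper, which also reads them off the augmented pair $(\Op A_+,\Op B_+)$ of Part I. In Items 2 and 4 you take a genuinely different route.

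\emph{Item 2.} The paper cites Part I for the orthogonal splitting and for the isometry $\Op L^{-1}_{\V\phi}:L_2\to\Spc H'_{\Op A}$. You instead rederive it: the kernel from \eqref{Eq:InnerProd}, the closed range from the isometry on $\ker\V\phi$, and density from Condition 2 of Theorem \ref{Thm:PracticalChecklist}. There is one slip in this step. The projector that leaves $\Lop$ unchanged on $\Spc S$ is $\Op P^\ast_{\V\phi}=\Identity-\Op R_{\V p}\Op R_{\V\phi}$, not $\Op P_{\V\phi}$; the latter subtracts the $\phi_n$, maps into $\Spc S_{\V p}$, and does not preserve $\Lop$. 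Moreover, $\Op P^\ast_{\V\phi}(\Spc S)$ lies in $\Spc H_{\Lop,\V\phi}$, not in $\Spc S$. With that correction you get $\Lop(\Spc H_{\Lop,\V\phi})\supset\Lop(\Spc S)$, and the argument stands.

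\emph{Item 4, lower bound.} The paper splits $f$ along the new complement $\ker\widetilde{\V\phi}$. It then needs the bounded-inverse theorem for the induced map $\Spc H_\Lop/\Spc N_{\V p}\to L_2$, plus a topological-complement argument, which yields a non-explicit constant $c'_\Lop$. You split along the universal complement $\ker\V\phi^0$, on which the norm is exactly $\|\Lop\cdot\|_{L_2}$. The duality bound alone then controls $\V\phi(g)$ and gives an explicit constant in terms of $\|\phi_n\|_{\Spc H'_\Lop}$.

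Your route is more elementary. It also sidesteps the obstruction the paper mentions, namely that the dual norm attached to the new system is not available before equivalence is proved, and it needs no open-mapping argument. The paper's route is more structural, since it uses only that $\Lop$ is a bounded surjection with kernel $\Spc N_{\V p}$.

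One small gap to close: if the new system comes with a different basis of $\Spc N_{\V p}$, your identity $\V\phi^0(p)=\V\phi(p)$ fails. Replace it by the finite-dimensional bound $|\V\phi^0(p)|\le K|\V\phi(p)|$, which holds because both are injective coordinate maps on $\Spc N_{\V p}$.
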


\begin{proof}[Proof of Theorem \ref{Theo:NativeSpace}]
Let us assume first that $\V \phi \subset \Spc S$, so that the augmented pair \eqref{Eq:Aplus} of
Part I is available. Items 1--3, 6, and 7 are then transcriptions of the results of Part I,
while Items 4 and 5 are new.

\medskip
\emph{Inner product and embeddings (Items 1 and 3)}:
The augmented operator $\Op B_+=\Op B+\Op R_{\V \phi}$ is positive-definite on $\Spc S$ with
$|f|^2_{\Op B_+}=\|\Lop f\|^2_{L_2}+|\V \phi(f)|_2^2$, and $(\Op A_+,\Op B_+)$ is a compatible
Riesz pair (Part I, Theorem~\ref{I-Thm:Augmentation}). Theorem~\ref{I-Theo:HilbertPositiveDef} of
Part I then identifies $\Spc H_\Lop=\overline{(\Spc S,|\cdot|_{\Op B_+})}$ as a
Hilbert--Gelfand space, which yields the embeddings $\Spc S \embedD \Spc H_\Lop \embedD \Spc S'$
and the completion formula of Item 3. The polarization of $|\cdot|_{\Op B_+}$ gives the inner
product, where the factorization $\Op B=\Lop^\ast\Lop$ turns $\langle \Op B f, g\rangle$ into
$\langle \Lop f, \Lop g\rangle_{L_2}$; this is \eqref{Eq:InnerProd}.

\medskip
\emph{Direct-sum decomposition (Item 2)}:
The orthogonal decomposition $\Spc H_\Lop=\Spc H'=\Spc H'_{\Op A}\oplus \Spc N_{\V p}$ is Item 3
of Theorem~\ref{I-Thm:Augmentation} of Part I. Its infinite-dimensional component is
characterized by the boundary condition $\V \phi(f)=\V 0$, because
$\Spc H'_{\Op A}\subset \Spc S'_{\V \phi}$, and it coincides with $\Op L^{-1}_{\V \phi}(L_2)$
because $\Op L^{-1}_{\V \phi}$ maps $L_2$ isometrically onto $\Spc H'_{\Op A}$ (Part I,
Proposition~\ref{I-Prop:Factorization}, Item 1). This gives
$\Spc H_{\Lop, \V \phi}=\Spc H'_{\Op A}$ with $\|f\|_{\Spc H_{\Lop, \V \phi}}=\|\Lop f\|_{L_2}$.
The finite-dimensional component is the effective null space of $\Lop$, since the continuous
extension of $\Lop$ annihilates $\Spc N_{\Op B}=\Spc N_{\V p}$, and nothing else in
$\Spc H_\Lop$.

\medskip
\emph{Riesz maps (Item 6)}:
Item 4 of Theorem~\ref{I-Thm:Augmentation} of Part I states that
$\Op A_+=\Op A_{\V \phi}+\Op R_{\V p}$ and $\Op B_+=\Lop^\ast\Lop+\Op R_{\V \phi}$ are isometric
and mutually inverse, which identifies them as the Riesz maps $\Op J_{\Spc H'_\Lop}$ and
$\Op J_{\Spc H_\Lop}$. Since the duality pairing is block-diagonal with respect to the two direct
sums of \eqref{Eq:AugmentedPair}, the restriction of $\Op A_+$ to $\Spc H_{\Op A}$ is
$\Op A_{\V \phi}: \Spc H_{\Op A}\to \Spc H'_{\Op A}=\Spc H_{\Lop,\V \phi}$, which is the Riesz map
of $\Spc H_{\Op A}$. Its factorization $\Op A_{\V \phi}=\Op L^{-1}_{\V \phi}\Op L^{-1\ast}_{\V \phi}$
is \eqref{Eq:Factorization}.

\medskip
\emph{Homogeneous part (Item 7)}:
The first equality is the definition \eqref{Eq:HB} of the homogeneous space, since
$|\varphi|_{\Op B}=\|\Lop \varphi\|_{L_2}$ on $\Spc S$. Item 1 of
Theorem~\ref{I-Thm:Concretization} of Part I identifies $\Op P^\ast_{\V \phi}$ as an isometric
isomorphism $\dot{\Spc H}_{\Op B}\to\Spc H'_{\Op A}=\Spc H_{\Lop,\V \phi}$. Moreover, since the
direct sum of Item 2 is orthogonal, the quotient norm on $\Spc H_\Lop/\Spc N_{\V p}$ is attained
at $f_0=\Op P^\ast_{\V \phi}f$ with
$\min_{p\in\Spc N_{\V p}}\|f+p\|_{\Spc H_\Lop}=\|f_0\|_{\Spc H_\Lop}=\|\Lop f\|_{L_2}$, which
yields the isometry $\Spc H_\Lop/\Spc N_{\V p}\cong\Spc H_{\Lop,\V \phi}\cong\dot{\Spc H}_{\Op B}$.
The duality with $\Spc H_{\Op A}$ and the identification of the Riesz map $\pi\Op A$ constitute
Clause (ii) of Definition~\ref{Def:Intrinsic}, while the identity $\pi\Op A_{\V \phi}=\pi\Op A$
and the independence of $\dot{\Spc H}_{\Op B}$, $\Spc H_{\Op A}$, and $\pi\Op A$ from the choice
of $\V \phi$ are Items 2 and 3 of Theorem~\ref{I-Thm:Concretization}.

\medskip
\emph{Equivalence of norms (Item 4)}:
As an alternative to $(\V \phi,\V p)$, we consider another biorthogonal 
system $(\widetilde{\V \phi},\widetilde{\V p})$ with $\widetilde{\V p}=(\tilde p_n)$ some basis of $\Spc N_{\V p}$,
$\widetilde{\V \phi}=(\widetilde \phi_n)$ with $\widetilde \phi_1, \dots, \widetilde \phi_{N_0} \in \Spc H'_\Lop$ and $\langle \widetilde \phi_m, \widetilde p_n\rangle=\delta_{m,n}$ (biorthogonality). The corresponding inner product
induces the Hilbertian norm
\begin{align}
\|f\|_{\widetilde{\Spc H}_\Lop}=\left(\|\Lop f\|_{L_2}^2 + \sum_{n=1}^{N_0} |\langle \widetilde{\phi}_n, f \rangle|^2\right)^{\tfrac{1}{2}}.
\end{align}

Since two norms that are equivalent on $\Spc S$ yield the same completion, it suffices to
establish the two-sided bound \eqref{Eq:NormEquiv} below.

The upper bound follows from the duality bound \eqref{Eq:DualityBound}, which is
applicable because $\widetilde{\phi}_n \in \Spc H'_\Lop$. Together with
$\|\Lop f\|_{L_2}\le \|f\|_{\Spc H_\Lop}$, it gives
\begin{align*}
\|f\|^2_{\widetilde{\Spc H}_\Lop}\le \|\Lop f\|_{L_2}^2 + \sum_{n=1}^{N_0} \|\widetilde{\phi}_n\|^2_{\Spc H'_\Lop}\|f\|^2_{\Spc H_\Lop}\le \left(1 + \sum_{n=1}^{N_0} \|\widetilde{\phi}_n\|^2_{\Spc H'_\Lop}\right) \|f\|^2_{\Spc H_\Lop}.
\end{align*}
The lower bound cannot be obtained by exchanging the roles of the two systems, for the dual norm
$\|\cdot\|_{\widetilde{\Spc H}'_\Lop}$ is not available before the equivalence has been
established. We rely instead on the structure of $\Lop$. By Items 1 and 2, the operator
$\Lop: \Spc H_\Lop \to L_2$ is bounded and surjective with $\ker \Lop=\Spc N_{\V p}$, so that the
induced map $\Spc H_\Lop/\Spc N_{\V p}\to L_2$ is a continuous bijection between Banach spaces.
Its inverse is continuous as well, which provides a constant $c_\Lop$ such that
\begin{align}
\label{Eq:BoundedBelow}
\min_{p \in \Spc N_{\V p}}\|f+p\|_{\Spc H_\Lop}\le c_\Lop \|\Lop f\|_{L_2},\qquad f \in \Spc H_\Lop.
\end{align}
The subspace $\Spc H_{\Lop,\widetilde{\V \phi}}=\{f \in \Spc H_\Lop: \widetilde{\V \phi}(f)=\V 0\}$ is
closed, because the $\widetilde{\phi}_n$ are continuous on $\Spc H_\Lop$, and biorthogonality makes
it a topological complement of $\Spc N_{\V p}$. On such a complement, the minimum in
\eqref{Eq:BoundedBelow} is equivalent to the norm itself, which yields a constant $c'_\Lop$ with
$\|f_0\|_{\Spc H_\Lop}\le c'_\Lop\|\Lop f_0\|_{L_2}$ for every $f_0 \in \Spc H_{\Lop,\widetilde{\V \phi}}$.
We now split $f=f_0+\tilde p$ with
$\tilde p=\Op R_{\widetilde{\V p}}\Op R_{\widetilde{\V \phi}}f=\sum_{n=1}^{N_0} \tilde p_n \langle \widetilde{\phi}_n, f\rangle \in \Spc N_{\V p}$
and $f_0=f-\tilde p\in \Spc H_{\Lop,\widetilde{\V \phi}}$. Since $\Lop f_0=\Lop f$, the
triangle and Cauchy--Schwarz inequalities give
\begin{align*}
\|f\|_{\Spc H_\Lop}\le \|f_0\|_{\Spc H_\Lop} + \|\tilde p\|_{\Spc H_\Lop}
\le c'_\Lop \|\Lop f\|_{L_2} + C_{\widetilde{\V p}}\,|\widetilde{\V \phi}(f)|_2
\le \sqrt{{c'_\Lop}^2+C_{\widetilde{\V p}}^2}\;\|f\|_{\widetilde{\Spc H}_\Lop}
\end{align*}
with $C_{\widetilde{\V p}}=\big(\sum_{n=1}^{N_0}\|\tilde p_n\|^2_{\Spc H_\Lop}\big)^{\tfrac12}$. The
two bounds combine into
\begin{align}
\forall f \in \Spc H_\Lop:\quad A_1 \|f\|_{\Spc H_\Lop} \le \|f\|_{\widetilde{\Spc H}_\Lop} \le A_2 \|f\|_{\Spc H_\Lop} 
\label{Eq:NormEquiv}
\end{align}
with $A_1=\big({c'_\Lop}^2+C_{\widetilde{\V p}}^2\big)^{-\tfrac{1}{2}}$ and
$A_2=\left(1 + \sum_{n=1}^{N_0} \|\widetilde{\phi}_n\|^2_{\Spc H'_\Lop}\right)^{\tfrac{1}{2}}$.

\medskip
\emph{Explicit representation (Item 5)}:
Let $f \in \Spc H_\Lop$ and let us set $w=\Lop f \in L_2$,
$p=(\Identity-\Op P^\ast_{\V \phi})f=\Op R_{\V p}\Op R_{\V \phi}f \in \Spc N_{\V p}$ and
$f_0=\Op P^\ast_{\V \phi}f \in \Spc H_{\Lop,\V \phi}$, which is the decomposition of Item 2. The
inversion relation $\Op L^{-1}_{\V \phi}\Lop=\Op P^\ast_{\V \phi}$ of Part I
(Proposition~\ref{I-Prop:Factorization}, Item 1) then gives $f_0=\Op L^{-1}_{\V \phi}\{w\}$.
Likewise, $f^\ast=\Op B f=\Lop^\ast w \in \Lop^\ast(L_2)=\Spc H'_{\Lop,\V \phi}$, so that the
definition $\Op L^{-1}_{\V \phi}=\Op A_{\V \phi}\Lop^\ast$ in \eqref{Eq:gInverse} yields
$\Op A_{\V \phi}\{f^\ast\}=\Op A_{\V \phi}\Lop^\ast w=\Op L^{-1}_{\V \phi}\{w\}=f_0$. This
proves the two forms of \eqref{Eq:Sumdecomp1}. The triplet $(w,f^\ast,p)$ is unique
because the sum in Item 2 is direct, while $\Lop$ and $\Lop^\ast$ are injective on
$\Spc H_{\Lop,\V \phi}$ and $L_2$, respectively.

\medskip
\emph{Non-universal systems}:
Let us finally consider a system $(\widetilde{\V p},\widetilde{\V \phi})$ with
$\widetilde{\V \phi}\subset \Spc H'_\Lop$. The set $\Spc H_\Lop$ and its topology are the ones
that have just been constructed from a universal system, while Item 4 shows that the norm
induced by $\widetilde{\V \phi}$ is equivalent to $\|\cdot\|_{\Spc H_\Lop}$. All the statements
are therefore preserved, the splitting of Item 2 being the only ingredient that depends on the
choice of the system: the component
$\tilde p=\Op R_{\widetilde{\V p}}\Op R_{\widetilde{\V \phi}}\{f\}$ differs from
$p=\Op R_{\V p}\Op R_{\V \phi}\{f\}$, while $w=\Lop f$ is common to all splittings.
\end{proof}

For completeness, we record the property of the conditional inverse $\Op A$ that makes the
representation of the members of $\Spc H_\Lop$ independent of any biorthogonal system.

\begin{proposition}[Range of the conditional inverse]
\label{Prop:RangeA}
Let $\Lop$ be $\V p$-admissible with conditional inverse $\Op A$ of the normal operator
$\Op B=\Lop^\ast\Lop$, extended by continuity to $\Op A: \Spc H'_\Lop \to \Spc S'$. Let us
further assume that $\Op A$ is {\em consistent}, in the sense that
\begin{align}
\label{Eq:Consistency}
(\Lop^\ast\Lop)\{\Op A \varphi\}=\varphi \quad \mbox{for every } \varphi \in \Spc S \mbox{ such
that } \Op A\{\varphi\} \in \Spc H_\Lop,
\end{align}
where $(\Lop^\ast\Lop): \Spc H_\Lop \to \Spc H_{\Op A}$ is the extension of
Item \ref{Item:RieszMaps} of Theorem \ref{Theo:NativeSpace}. Then,
\begin{align}
\label{Eq:Afprop}
\Op A\{\nu\} \in \Spc H_\Lop
\quad \Leftrightarrow \quad
\nu \in \Spc H_{\Op A}=\Lop^\ast(L_2)
\quad \Leftrightarrow \quad
\V p(\nu)=\V 0 .
\end{align}
Consequently, every $f \in \Spc H_\Lop$ has a unique decomposition
\begin{align}
\label{Eq:DecomposeAgostic}
f=\Op A\{f^\ast\} + p, \qquad
f^\ast=(\Lop^\ast\Lop)\{f\}\in \Spc H_{\Op A}, \quad p \in \Spc N_{\V p},
\end{align}
which is the counterpart of Item \ref{Item:ExplicitRep} of Theorem \ref{Theo:NativeSpace} that
does not refer to any biorthogonal system.
\end{proposition}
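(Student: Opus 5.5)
The plan is to compare $\Op A$ with the Gram operator $\Op A_{\V \phi}$ of \eqref{Eq:Aphi} for some universal biorthogonal system $(\V p,\V \phi)$, and then reduce everything to Item \ref{Item:ExplicitRep} of Theorem \ref{Theo:NativeSpace}. Expanding $\Op A_{\V \phi}=(\Identity-\Op R_{\V p}\Op R_{\V \phi})\Op A(\Identity-\Op R_{\V \phi}\Op R_{\V p})$ shows that, for every $\nu\in\Spc H'_\Lop$,
\begin{align*}
\Op A\{\nu\}-\Op A_{\V \phi}\{\nu\}\in \Spc N_{\V p}+\Op A\big(\Spc N_{\V \phi}\big),
\end{align*}
where the first term comes from $\Op R_{\V p}\Op R_{\V \phi}\Op A\{\cdot\}$. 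The second term is $\sum_n \langle p_n,\nu\rangle\,(\Identity-\Op R_{\V p}\Op R_{\V \phi})\Op A\{\phi_n\}$. In particular, if $\V p(\nu)=\V 0$, this correction vanishes, and then $\Op A\{\nu\}=\Op A_{\V \phi}\{\nu\}+p$ with $p\in\Spc N_{\V p}$. Since $\Op A_{\V \phi}:\Spc H'_\Lop\to\Spc H_\Lop$ by Item \ref{Item:ExplicitRep}, this gives $\Op A\{\nu\}\in\Spc H_\Lop$. The formula is obtained on $\Spc S$ and then extended by continuity, using continuity of $\Op A$ into $\Spc S'$ and of the finite-rank maps $\Op R_{\V p}\Op R_{\V \phi}$.

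The second equivalence, $\nu\in\Spc H_{\Op A}=\Lop^\ast(L_2)\Leftrightarrow \V p(\nu)=\V 0$ for $\nu\in\Spc H'_\Lop$, follows from the orthogonal splitting $\Spc H'_\Lop=\Spc H_{\Op A}\oplus\Spc N_{\V \phi}$ in \eqref{Eq:AugmentedPair}. Write $\nu=\nu_0+\sum_n c_n\phi_n$ with $\nu_0\in\Spc H_{\Op A}$. Elements of $\Spc H_{\Op A}$ are limits of elements of $\Spc S_{\V p}$, and $\V p$ is continuous on $\Spc H'_\Lop$ because $p_n\in\Spc H_\Lop$. Hence $\V p(\nu_0)=\V 0$, and biorthogonality gives $\V p(\nu)=(c_n)$. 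So $\V p(\nu)=\V 0$ exactly when $\nu\in\Spc H_{\Op A}$.

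The remaining implication, $\Op A\{\nu\}\in\Spc H_\Lop\Rightarrow\V p(\nu)=\V 0$, is the main obstacle. This is where consistency \eqref{Eq:Consistency} enters. Decompose $\nu=\nu_0+\sum_n c_n\phi_n$ as above. From the first step, $\Op A\{\nu_0\}\in\Spc H_\Lop$, so $\Op A\{\sum_n c_n\phi_n\}=\Op A\{\nu\}-\Op A\{\nu_0\}\in\Spc H_\Lop$. Since $\varphi=\sum_n c_n\phi_n\in\Spc S$, consistency gives $(\Lop^\ast\Lop)\{\Op A\varphi\}=\varphi$. The left-hand side lies in $\Lop^\ast(L_2)=\Spc H_{\Op A}$ by Item \ref{Item:RieszMaps}, so $\varphi\in\Spc H_{\Op A}\cap\Spc N_{\V \phi}=\{0\}$ by directness of the sum, and hence all $c_n=0$. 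The delicate point is ensuring that consistency, stated only for $\varphi\in\Spc S$, suffices. The reduction above is designed exactly so that the only element to which consistency is applied is $\varphi=\sum_n c_n\phi_n$, which lies in $\Spc S$ because the system is universal.

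For the decomposition \eqref{Eq:DecomposeAgostic}, take $f\in\Spc H_\Lop$ and set $f^\ast=(\Lop^\ast\Lop)f\in\Spc H_{\Op A}$. By the first step, $\Op A\{f^\ast\}=\Op A_{\V \phi}\{f^\ast\}+p'$ with $p'\in\Spc N_{\V p}$. Item \ref{Item:ExplicitRep} gives $f=\Op A_{\V \phi}\{f^\ast\}+p''$, so $f=\Op A\{f^\ast\}+(p''-p')$. For uniqueness, suppose $\Op A\{g\}+q=0$ with $g\in\Spc H_{\Op A}$ and $q\in\Spc N_{\V p}$. Then $\Op A_{\V \phi}\{g\}\in\Spc H_{\Lop,\V \phi}\cap\Spc N_{\V p}=\{0\}$, and the injectivity of the Riesz map $\Op A_{\V \phi}$ on $\Spc H_{\Op A}$ gives $g=0$, hence $q=0$.
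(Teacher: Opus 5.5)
Your proof is correct and follows the paper's argument essentially step by step. You fix a universal system, use $\Op A\{\nu\}=\Op A_{\V\phi}\{\nu\}+p$ with $p\in\Spc N_{\V p}$ whenever $\V p(\nu)=\V 0$, apply consistency only to the $\Spc N_{\V\phi}$-component (which lies in $\Spc S$), and invoke Item~\ref{Item:ExplicitRep} of Theorem~\ref{Theo:NativeSpace} for the decomposition. The deviations are minor: you verify the second equivalence explicitly instead of recalling it, and you obtain uniqueness from the injectivity of the Riesz map $\Op A_{\V\phi}$ on $\Spc H_{\Op A}$ rather than by applying $\Lop^\ast\Lop$, which spares you from justifying $(\Lop^\ast\Lop)\Op A\{g\}=g$ for $g\in\Spc H_{\Op A}$ outside $\Spc S$.
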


\begin{proof}[Proof of Proposition \ref{Prop:RangeA}]
Let us fix a universal biorthogonal system $(\V \phi,\V p)$, so that
$\Spc H'_\Lop=\Spc H_{\Op A}\oplus \Spc N_{\V \phi}$ with $\Spc N_{\V \phi}\subset \Spc S$, and
let us recall that $\Spc H_{\Op A}=\{\nu \in \Spc H'_\Lop: \V p(\nu)=\V 0\}$, which is the second
equivalence in \eqref{Eq:Afprop}.

\medskip
\emph{Sufficiency}:
Let $\nu \in \Spc H_{\Op A}$. Since $\Op P_{\V \phi}$ acts as the identity on $\Spc H_{\Op A}$, we
have that $\Op A_{\V \phi}\{\nu\}=\Op A\{\nu\}-\Op R_{\V p}\Op R_{\V \phi}\Op A\{\nu\}$. The first
term of the right-hand side is in $\Spc H_{\Lop,\V \phi}$ by Item \ref{Item:ExplicitRep} of
Theorem \ref{Theo:NativeSpace}, while the second one is in $\Spc N_{\V p}$, which yields
$\Op A\{\nu\} \in \Spc H_{\Lop,\V \phi}\oplus \Spc N_{\V p}=\Spc H_\Lop$.

\medskip
\emph{Necessity}:
Let $\phi \in \Spc N_{\V \phi}\backslash\{0\}$ and let us assume that
$\Op A\{\phi\} \in \Spc H_\Lop$. The consistency requirement \eqref{Eq:Consistency} gives
$\phi=(\Lop^\ast\Lop)\{\Op A\phi\}$, while Item \ref{Item:ExplicitRep} of
Theorem \ref{Theo:NativeSpace} places the right-hand side in $\Spc H_{\Op A}$. Hence
$\phi \in \Spc H_{\Op A}\cap \Spc N_{\V \phi}=\{0\}$, which is a contradiction. A general
$\nu \in \Spc H'_\Lop$ splits as $\nu=\psi+\phi$ with $\psi \in \Spc H_{\Op A}$ and
$\phi \in \Spc N_{\V \phi}$; since $\Op A\{\psi\} \in \Spc H_\Lop$ by the previous step, the
hypothesis $\Op A\{\nu\} \in \Spc H_\Lop$ forces $\Op A\{\phi\} \in \Spc H_\Lop$, and hence
$\phi=0$.

\medskip
\emph{Decomposition}:
Item \ref{Item:ExplicitRep} of Theorem \ref{Theo:NativeSpace} gives
$f=\Op A_{\V \phi}\{f^\ast\}+q$ with $q \in \Spc N_{\V p}$ and $f^\ast=(\Lop^\ast\Lop)\{f\}$, while the first step
gives $\Op A_{\V \phi}\{f^\ast\}=\Op A\{f^\ast\}-p'$ with $p' \in \Spc N_{\V p}$. This is
\eqref{Eq:DecomposeAgostic} with $p=q-p'$. As for uniqueness, let us suppose that
$\Op A\{f^\ast_1\}+p_1=\Op A\{f^\ast_2\}+p_2$ with $f^\ast_1,f^\ast_2 \in \Spc H_{\Op A}$. Then,
$\Op A\{f^\ast_1-f^\ast_2\} \in \Spc N_{\V p}$, and the application of $(\Lop^\ast\Lop)$ gives
$f^\ast_1=f^\ast_2$.
\end{proof}

\begin{remark}
\label{Rem:Consistency}
The consistency requirement \eqref{Eq:Consistency} is automatically met for
$\varphi \in \Spc S_{\V p}$ (Part I, Theorem~\ref{I-Thm:Concretization}, Item 2), and it holds
for any conditional inverse of Green's type. In particular, when $\Lop$ is shift-invariant, the
canonical Fourier-domain inverse of Section \ref{Sec:RKHS} satisfies the stronger algebraic
identity $(\Lop^\ast\Lop)\Op A=\Identity$ on $\Spc S$, which implies \eqref{Eq:Consistency}.
\end{remark}

\begin{remark}[Two readings of the inversion identity]
\label{Rem:TwoReadings}
Let us contrast two ways of expressing that $\Op A$ inverts $\Op B=\Lop^\ast\Lop$. The first is
$\Op B\Op A=\Identity$ on $\Spc S_{\V p}$, understood through the canonical Hilbertian extension
$\Op B: \Spc H_\Lop \to \Spc H_{\Op A}$; this statement is well-formed for every conditional
inverse, and it holds for all of them. The second is the Green's-function version
$\Op B\Op A\{\varphi\}=\varphi$ for all $\varphi \in \Spc S$, which requires an interpretation
of $\Op B$ on the range $\Op A(\Spc S)\not\subset \Spc H_\Lop$. The canonical interpretation is
the extension by transposition: since $\Op B$ is symmetric, we set
$\langle \widetilde{\Op B} u, \psi\rangle \eqdef \langle u, \Op B\psi\rangle$ for every
$u \in \Spc S'$ such that the right-hand side is well-defined for all $\psi \in \Spc S$. This
extension involves no arbitrary choice and agrees with the Hilbertian one on $\Spc H_\Lop$,
since both satisfy $\langle \widetilde{\Op B} f,\psi\rangle=\langle \Lop f, \Lop \psi
\rangle_{L_2}$. The verification that $\Op A(\Spc S)$ lies in the domain of $\widetilde{\Op B}$
is immediate for differential operators and follows from degree counting for fractional ones.
Specifically, the conditional inverse for $\Lop=(-\Delta)^{\gamma/2}$ is $\Op A=(-\Delta)^{-\gamma}$, the fractional integrator or Riesz potential of order $2\gamma$.
For $\varphi \in \Spc S(\R^d)$, $(-\Delta)^{-\gamma}\{\varphi\}$ with $2\gamma \notin 2\N$ exhibits a growth
$|\V x|^{2\gamma-d}$---with a logarithmic correction when $2\gamma-d$ is
an even integer---that is offset by the algebraic decay $|\V x|^{-(d+2\gamma)}$ of $\Op B\{\psi\}$
induced by the singularity of the symbol at $\bw=\V 0$; the product decays like
$|\V x|^{-2d}$, which is integrable at infinity irrespective of $\gamma$. Under this reading,
the second version is a strictly stronger requirement than the first: it selects the
Green's-type conditional inverses, and it pins down $\Op A$ on $\Spc N_{\V \phi}$ up to
$\Spc N_{\Lop^\ast\Lop}$-components.
\end{remark}

\section{Linear Inverse Problems over Native Spaces}
\label{Sec:InverseProblems}

\subsection{Generalized Variational Splines}
\label{Sec:VarSpline}
Our variational splines are specified as solutions of inverse problems where a function (or a distribution) $f \in \Spc H_\Lop$ 
is probed through a set of linear measurement functionals $\V \nu=(\nu_1, \dots,\nu_M)$.
Since the recovery of $f$ from such measurements is generally ill-posed, one seeks the ``most regular'' solution $f_{\rm spline}\in \Spc H_\Lop$ that fits the data, with our notion of
regularity being tied to an admissible operator $\Lop$.
In the noise-free scenario, this is formulated as $f_{0}=\arg \min_{f \in \Spc H_\Lop} \|\Lop f\|^2_{L_2}$ s.t. $\M y=\V \nu(f)$.
By using a $\V p$-admissible operator, one privileges solutions that are in the null space $\Spc N_{\V p}$ of $\Lop$, which is one of the important specificities of splines.
When the measurements are corrupted by noise, one adapts the methodology by approximating the data within the tolerance of the noise, rather than fitting them exactly. In practice, this is done by reformulating the task as the minimization of a combined loss that is the sum of a data-fidelity term
and a regularizing ``spline energy.''

\subsubsection{Abstract Representer Theorem}

As a prerequisite for this type of optimization, it is helpful to recall
a classic representer theorem that applies to the simpler case where the regularization involves a Hilbertian norm
rather than the semi-norm associated to some $\V p$-admissible operator $\Lop$.
\begin{theorem}[Minimum-norm fitting in Hilbert space and Lagrange duality]
\label{Theo:MinNorm}
Given a vector $\M z=(z_m)\in \R^M$, a Hilbert space $\Spc H$ and a set of continuous linear functionals $\nu_1,\cdots,\nu_M\in \Spc H'$, we consider the dual pair of optimization problems
\begin{align}
\min_{f \in \Spc H} &\tfrac{1}{2}\|f\|^2_{\Spc H} \quad\mbox{ subject to } \quad \langle\nu_m,f\rangle=z_m,\  m=1,\dots,M\label{Eq:Ginterpol}\\
\max_{\V \alpha \in \R^M} &\V \alpha^\Tr \M z - \frac{1}{2}\big\|\sum_{m=1}^M \alpha_m \nu_m\big\|_{\Spc H'}^2. \label{Eq:DualProb}
\end{align}
If the $\nu_1,\cdots,\nu_M\in \Spc H'$ are linearly independent, then both problems have unique solutions, denoted by
$f_0\in \Spc H$ and $\M a=(a_m) \in\R^M$, which are isometrically linked by the Riesz map with $\nu^\ast_m=\Op J_{\Spc H'}\{\nu_m\}$. Specifically,
\begin{align}
\sum_{m=1}^M a_m \nu_m&=\Op J_{\Spc H}\{f_0\} \in \Spc H'%
\nonumber \\ \hspace*{1cm} \quad \Leftrightarrow &\quad f_0=\Op J_{\Spc H'}\left\{\sum_{m=1}^M a_m \nu_m\right\}=\sum_{m=1}^M a_m \nu^\ast_m \in \Spc H.
\end{align}
\end{theorem}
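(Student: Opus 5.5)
The plan is to reduce the primal problem \eqref{Eq:Ginterpol} to a finite-dimensional one by an orthogonal decomposition, and then read off the dual problem \eqref{Eq:DualProb} and the link between the two via the Riesz map $\Op J_{\Spc H}$.

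\emph{Setup.} Let $\nu^\ast_m=\Op J_{\Spc H'}\{\nu_m\}\in\Spc H$. By the Riesz pairing \eqref{Eq:RieszPairing}, we have $\langle \nu_m,f\rangle=\langle \nu^\ast_m,f\rangle_{\Spc H}$ for all $f\in\Spc H$. Since $\Op J_{\Spc H'}$ is an isometric bijection, the $\nu^\ast_m$ inherit the linear independence of the $\nu_m$. Their Gram matrix $\M G=[\langle \nu^\ast_m,\nu^\ast_n\rangle_{\Spc H}]=[\langle \nu_m,\nu_n\rangle_{\Spc H'}]$ is therefore symmetric positive-definite.

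\emph{Primal problem.} Set $V={\rm span}\{\nu^\ast_m\}$. It is finite-dimensional, hence closed, so $\Spc H=V\oplus V^\perp$ orthogonally, with $V^\perp=\{f:\V\nu(f)=\V 0\}$.
\begin{enumerate}
\item Feasibility: the constraints restricted to $V$ read $\M G\M a=\M z$ for $f=\sum_m a_m\nu^\ast_m$. Since $\M G$ is invertible, there is a feasible element $f_0=\sum_m a_m\nu^\ast_m$ with $\M a=\M G^{-1}\M z$.
\item Optimality and uniqueness: any feasible $f$ satisfies $f-f_0\in V^\perp$. Hence $\|f\|^2_{\Spc H}=\|f_0\|^2_{\Spc H}+\|f-f_0\|^2_{\Spc H}$ by Pythagoras, with equality iff $f=f_0$.
\end{enumerate}
Applying $\Op J_{\Spc H}=\Op J_{\Spc H'}^{-1}$ to $f_0=\sum_m a_m\nu^\ast_m$ gives $\Op J_{\Spc H}\{f_0\}=\sum_m a_m\nu_m$. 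This is the stated equivalence.

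\emph{Dual problem.} The objective of \eqref{Eq:DualProb} equals $\V\alpha^\Tr\M z-\tfrac12\V\alpha^\Tr\M G\V\alpha$, because $\|\sum_m\alpha_m\nu_m\|^2_{\Spc H'}=\V\alpha^\Tr\M G\V\alpha$ by the isometry. It is strictly concave since $\M G\succ 0$. Its unique maximizer solves $\M G\V\alpha=\M z$, i.e., $\V\alpha=\M a$, which identifies the dual solution with the coefficients of $f_0$.

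\emph{Strong duality (optional check).} For any $\V\alpha$ and any feasible $f$, Cauchy--Schwarz together with the pairing bound gives $\V\alpha^\Tr\M z=\langle\sum_m\alpha_m\nu_m,f\rangle\le \tfrac12\|\sum_m\alpha_m\nu_m\|^2_{\Spc H'}+\tfrac12\|f\|^2_{\Spc H}$. This is weak duality. Equality holds at $(\M a,f_0)$, since both optimal values equal $\tfrac12\M a^\Tr\M G\M a$.

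\emph{Main obstacle.} None of these steps is deep. The only point needing care is transferring linear independence and the inner products from $\Spc H'$ to $\Spc H$ through the Riesz map. Once the isometry $\langle\nu_m,\nu_n\rangle_{\Spc H'}=\langle\nu^\ast_m,\nu^\ast_n\rangle_{\Spc H}$ is established, everything reduces to the invertibility of $\M G$.
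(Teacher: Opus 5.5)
Your proof is correct and follows the paper's argument. Both use the Riesz representers $\nu^\ast_m$, the positive-definite Gram matrix $\M G$, the feasible element $\sum_m a_m\nu^\ast_m$ with $\M a=\M G^{-1}\M z$, Pythagoras on the affine feasible set, and maximization of the strictly concave form $\V\alpha^\Tr\M z-\tfrac12\V\alpha^\Tr\M G\V\alpha$. The one minor difference is on the duality side: the paper obtains \eqref{Eq:DualProb} as the Lagrange dual function by completing the square in the Lagrangian, whereas you evaluate the given dual objective directly and certify the pairing via the Cauchy--Schwarz weak-duality bound plus equality of both optimal values at $\tfrac12\M a^\Tr\M G\M a$.
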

The proof of Theorem \ref{Theo:MinNorm}, which solely rests on the existence of the Riesz map, can be found in Appendix \ref{App:RepTheorem}.
The power of this result
is that it enables us to reformulate a constrained infinite-dimensional minimization problem as an unconstrained finite-dimensional optimization \big(Eq.~\eqref{Eq:DualProb}\big) that can then be solved using standard vector calculus, as shown in the first part of the proof.

\subsubsection{Characterization of Variational Splines}
For the cases of our interest where the regularization involves a semi-norm, we need to be a bit more careful
to ensure that the minimization problem is well-posed.

\begin{definition}[$\Lop$-compatible measurements]
\label{Def:AdmisPoints} Let $\Spc H_\Lop$ be the native Hilbert space of a $\V p$-admissible operator $\Lop$ with
$\V p=(p_1,\dots,p_{N_0})$.
We say that the linear measurement operator $\V \nu=(\nu_m): \Spc H_\Lop \to \R^M$ is \emph{$\Lop$-compatible} if
\begin{enumerate}
\item  the $\nu_m$ are linearly independent with $\nu_m \in \Spc H_\Lop'$ (continuity) for all $m$;
\item the matrix $\M P=\V \nu(\V p) \in \R^{M \times N_0}$ with $[\M P]_{m,n}=\langle \nu_m, p_n\rangle$ is of rank $N_0$. 
\end{enumerate}
\end{definition}

The rank condition is precisely the hypothesis under which the least-squares fitting problem is well posed over the null space $\Spc N_{\V p}$ of $\Lop$. Indeed, by writing $p=\sum_{n=1}^{N_0} b_n p_{n}$, we have that
\begin{align}
\label{Eq:LSpol}
\min_{p\in \Spc N_{\V p}} \sum_{m=1}^M |y_m-\langle \nu_m, p\rangle|^2=
\min_{\M b \in \R^{N_0}} \|\M y- \M P \M b\|^2.
\end{align}
The minimum is achieved for $\M b=(b_n)=(\M P^\Tr\M P)^{-1}\M P^\Tr \M y$ where the rank condition guarantees the invertibility of the normal matrix $(\M P^\Tr\M P) \in \R^{N_0 \times N_0}$. 

We now have all the elements needed to characterize the solutions of a broad class of variational data-fitting problems in which the regularization is given by a spline energy.  Although the optimization is posed over an infinite-dimensional native space, a remarkable and unifying result emerges: all solutions share the same parametric form 
\eqref{Eq:f0lin2}. Specifically, they are generalized splines that belong to a finite-dimensional reconstruction space whose basis functions depend exclusively on the pair 
$(\Lop, \V \nu)$.

\begin{theorem}[Representer theorem for generalized variational splines]
\label{Theo:GeneralRepSemiHilbert}
Let $\Lop: \Spc S \to L_2$ be a $\V p$-admissible operator with native space $\Spc H_\Lop$ and
conditional inverse $\Op A$, let $\V \nu=(\nu_m): \Spc H_\Lop \to \R^M$ with $M>N_0$ be
$\Lop$-compatible in the sense of Definition \ref{Def:AdmisPoints}, and let
$E: \R^M \times \R^M \to \R_{\ge0}\cup \{+\infty\}$ be lower-semicontinuous, coercive, and
convex in its second argument. Then, for any fixed $\M y\in \R^M$ and $\lambda \in \R_{\ge 0}$,
the solution of the optimization problem
\begin{align}
\label{Eq:GenericOptimizationProb2}
\arg \min_{f \in \Spc H_\Lop} J(f|\M y,\lambda) \quad\mbox{ with }\quad J(f|\M y,\lambda)=E\big(\M y, \V \nu(f)\big)+ \lambda \|\Lop f\|^2_{L_2}
\end{align}
is unique and admits the
linear parametrization 
\begin{align}
\label{Eq:f0lin2}
f_\lambda=\sum_{m=1}^{M-N_0} \tilde a_m \widetilde{\psi}_m +\sum_{n=1}^{N_0} b_{n} p_n,
\end{align}
with expansion coefficients $(\widetilde{\V a},\V b)\in \R^{M}$ and 
fixed basis functions $p_n\in \Spc N_\V p$ and $\widetilde{\psi}_m=\Op A_{\widetilde{\V \phi}}\{\widetilde{\nu}_m\}\in \Spc H_{\Lop,\widetilde{\V \phi}}$, where
$\Op A_{\widetilde{\V \phi}}$ is the Riesz map $\Spc H_{\Op A} \to \Spc H'_{\Op A}=\Spc H_{\Lop,\widetilde{\V \phi}}$ 
and  $\widetilde{\V \nu}=(\widetilde{\nu}_m)=\widetilde{\M Q}^\Tr \V \nu$
is the complementary measurement basis specified in Lemma \ref{Prop:DirectSum}. 
Alternatively, the solution has an overparametrized representation as
\begin{align}
\label{Eq:f0lin2b}
f_\lambda=\sum_{m=1}^{M} a_m \psi_m +\sum_{n=1}^{N_0} b_{n} p_n\quad  \mbox{ s.t. }\quad
\M P^\Tr \M a=\M 0
\end{align}
with $\V a\in \R^{M}$ and $\psi_m=\Op A\{\nu_m\}$. The connection between the two parametrizations is
 $\M a=\widetilde{\M Q}\widetilde{\V a}$ and $\widetilde{\V a}=\M Q^\Tr \M a$ where the matrices $\widetilde{\M Q}, \M Q \in \R^{M \times (M-N_0)}$ are specified in Lemma \ref{Prop:DirectSum}.
\end{theorem}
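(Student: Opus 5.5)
The plan rests on the conditioned basis of Lemma~\ref{Prop:DirectSum}, whose content I take to be the following. Since $\M P$ has rank $N_0$, one can choose a biorthogonal system $(\widetilde{\V\phi},\V p)$ with $\widetilde{\V\phi}\subset{\rm span}\{\nu_m\}$, namely $\widetilde{\V\phi}=(\M P^\Tr\M P)^{-1}\M P^\Tr\V\nu$. One also picks a matrix $\widetilde{\M Q}\in\R^{M\times(M-N_0)}$ of full rank with $\M P^\Tr\widetilde{\M Q}=\M 0$, so that $\widetilde{\V\nu}=\widetilde{\M Q}^\Tr\V\nu$ satisfies $\V p(\widetilde\nu_m)=\V 0$, i.e.\ $\widetilde\nu_m\in\Spc H_{\Op A}$ by Proposition~\ref{Prop:RangeA}. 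The matrix $\M Q$ is the biorthogonal partner with $\M Q^\Tr\widetilde{\M Q}=\Identity$ and $\M Q^\Tr\M P=\M 0$, so that ${\rm span}\{\nu_m\}=\widetilde{\V\phi}\oplus\widetilde{\V\nu}$. By Item~\ref{Item:NormEquiv} of Theorem~\ref{Theo:NativeSpace}, $\widetilde{\V\phi}\subset\Spc H'_\Lop$ gives an equivalent norm on $\Spc H_\Lop$ with the splitting $\Spc H_\Lop=\Spc H_{\Lop,\widetilde{\V\phi}}\oplus\Spc N_{\V p}$.

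\textbf{Step 1: reduction to the primary space.} Write $f=f_0+p$ with $f_0\in\Spc H_{\Lop,\widetilde{\V\phi}}$ and $p=\sum_n b_np_n$. Then $\|\Lop f\|_{L_2}=\|f_0\|_{\Spc H_{\Lop,\widetilde{\V\phi}}}$. The measurements decouple as
\[
\widetilde{\V\phi}(f)=\V b,\qquad \widetilde{\V\nu}(f)=\widetilde{\V\nu}(f_0),
\]
because $\widetilde{\V\nu}$ annihilates $\Spc N_{\V p}$ and $\widetilde{\V\phi}(f_0)=\V 0$. The map $\V\nu(f)\mapsto(\widetilde{\V\nu}(f),\widetilde{\V\phi}(f))$ is invertible, so $J$ becomes a function of $(\widetilde{\V\nu}(f_0),\V b)$ plus $\lambda\|f_0\|^2$.

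\textbf{Step 2: existence and the form of the solution.} Fix any minimizer candidate and set $\M z=\widetilde{\V\nu}(f_0)$. Among all $f_0$ with these measurements, the penalty is minimized, by Theorem~\ref{Theo:MinNorm} applied in the Hilbert space $\Spc H_{\Lop,\widetilde{\V\phi}}$, by
\[
f_0=\sum_m\tilde a_m\Op J\{\widetilde\nu_m\}.
\]
Here the Riesz map of $\Spc H_{\Lop,\widetilde{\V\phi}}'\cong\Spc H_{\Op A}$ is $\Op A_{\widetilde{\V\phi}}$ by Item~\ref{Item:RieszMaps}, which yields $\widetilde\psi_m$. The restrictions $\widetilde\nu_m|_{\Spc H_{\Lop,\widetilde{\V\phi}}}$ stay linearly independent: a vanishing combination would lie in $\Spc N_{\V p}$'s annihilator complement and would force $\widetilde{\M Q}\V c\in{\rm range}$ relations that contradict the independence of $\V\nu$. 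Hence the problem reduces to a finite-dimensional program in $(\widetilde{\V a},\V b)\in\R^M$, namely $E(\M y,\cdot)$ composed with an invertible linear map, plus the strictly convex term $\lambda\widetilde{\V a}^\Tr\M G\widetilde{\V a}$ with Gram matrix $\M G\succ0$. Coercivity and lower semicontinuity of $E$ give existence, provided $J$ is proper.

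\textbf{Step 3: uniqueness.} This is the step I expect to be the main obstacle, since $E$ is only convex. For $\lambda>0$, two minimizers must share the same $f_0$ by strict convexity of the quadratic term. They therefore have equal $\widetilde{\V\nu}$-data, and differ only by $p\in\Spc N_{\V p}$. Uniqueness of $\V b$ then needs strict convexity of $E$ in its second argument along ${\rm range}\,\M P$ (injective by the rank condition). I would try to invoke the hypothesis in that form, and for $\lambda=0$ treat the problem as interpolation of the minimizing measurement set.

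\textbf{Step 4: the second parametrization.} Substitute $\widetilde\nu_m=\sum_k[\widetilde{\M Q}]_{k,m}\nu_k$ and $\Op A_{\widetilde{\V\phi}}=\Op P^\ast\Op A\Op P$. On $\Spc H_{\Op A}$ the operator $\Op P$ acts as the identity, so $\Op A_{\widetilde{\V\phi}}\{\widetilde\nu_m\}$ equals $\Op A\{\widetilde\nu_m\}$ up to an element of $\Spc N_{\V p}$, which is absorbed into $\V b$. With $\M a=\widetilde{\M Q}\widetilde{\V a}$ this gives \eqref{Eq:f0lin2b}, and $\M P^\Tr\M a=\M 0$ holds automatically. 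Conversely, $\M P^\Tr\M a=\M 0$ means $\M a\in{\rm range}\,\widetilde{\M Q}$, and $\widetilde{\V a}=\M Q^\Tr\M a$ by biorthogonality.
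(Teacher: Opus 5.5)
Your plan follows the paper's proof: the measurement-adapted system of Lemma~\ref{Prop:DirectSum}, the splitting $f=f_0+\sum_n b_np_n$ with $\V\nu(f)=\M Q\widetilde{\V\nu}(f_0)+\M P\M b$, Theorem~\ref{Theo:MinNorm} in $\Spc H_{\Lop,\widetilde{\V\phi}}$ with Riesz map $\Op A_{\widetilde{\V\phi}}$, and the switch to \eqref{Eq:f0lin2b} via $\Op A\{\nu\}-\Op A_{\widetilde{\V\phi}}\{\nu\}\in\Spc N_{\V p}$ for $\nu\in\Spc H_{\Op A}$.

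Two local repairs are needed. In Step~4, applying $\Op P^\ast_{\widetilde{\V\phi}}=\Identity-\Op R_{\V p}\Op R_{\widetilde{\V\phi}}$ to $\Op A\{\widetilde\nu_m\}$ presupposes $\Op A\{\widetilde\nu_m\}\in\Spc H_\Lop$, because $\widetilde{\V\phi}$ need not lie in $\Spc S$. This is exactly what Proposition~\ref{Prop:RangeA} supplies, and the paper invokes it at this point. In Step~2, the independence argument should simply read as follows. If $\sum_m c_m\widetilde\nu_m$ vanishes on $\Spc H_{\Lop,\widetilde{\V\phi}}$, it also vanishes on $\Spc N_{\V p}$, hence on $\Spc H_\Lop$. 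So $\widetilde{\M Q}\V c=\V 0$ by the independence of $\V\nu$, and $\V c=\V 0$ because $\widetilde{\M Q}$ has full column rank. Your existence route through the finite-dimensional program in $(\widetilde{\V a},\M b)$ is sound. Unlike the paper's joint-coercivity claim on $\Spc H_{\Lop,\widetilde{\V\phi}}\times\R^{N_0}$, it does not break down at $\lambda=0$.

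Step~3 is where you fall short of the statement, but your hesitation is justified: under the stated hypotheses uniqueness is false, so no argument can close it. The paper closes it by asserting that strict convexity of $\tilde f\mapsto J(\tilde f,\M b)$ together with convexity of $\M b\mapsto J(\tilde f,\M b)$ makes $(\tilde f,\M b)\mapsto J$ jointly strictly convex. That does not follow, since $J$ can be flat along the directions $(0,\M d)$.

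Here is a concrete counterexample. Take $\Lop=\Dop$, $\nu_m=\delta_{x_m}$ with $M=2$ and $x_1\neq x_2$ (so $\M P=(1,1)^\Tr$), $\M y=\M 0$, any $\lambda>0$, and the continuous, coercive, convex loss $E(\M y,\M z)=\max(0,\|\M z-\M y\|_2-1)$. Then $J\ge0$, and every constant $f=b$ with $|b|\le 1/\sqrt2$ attains $J=0$. At $\lambda=0$, uniqueness fails for every $E$: $J$ depends on $f$ only through $\V\nu(f)$, and $\V\nu$ has an infinite-dimensional kernel in $\Spc H_\Lop$.

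What does hold is what your Steps~2--3 prove. For $\lambda>0$ (and $J$ proper), strict convexity of $\lambda\|\Lop f\|^2_{L_2}$ forces all minimizers to share $f_0$, hence $\widetilde{\V a}$, so every minimizer has the form \eqref{Eq:f0lin2}. The coefficient $\M b$ is then unique if and only if $\M b\mapsto E(\M y,\M Q\widetilde{\V\nu}(f_0)+\M P\M b)$ has a unique minimizer. Your extra condition (strict convexity of $E(\M y,\cdot)$ along $\mathrm{range}\,\M P$) guarantees this and covers the least-squares case of Section~\ref{Sec:InvLS}. For $\lambda=0$ the theorem has to be reinterpreted, e.g., as selecting the minimizer of least $\|\Lop f\|_{L_2}$. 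That selection is unique when $E(\M y,\cdot)$ has a unique minimizer. In short, the hypothesis you propose to add is the correct repair of the theorem, not a defect of your plan.
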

Our approach for solving \eqref{Eq:GenericOptimizationProb2} is to select a 
biorthogonal system $(\widetilde{\V \phi},\V p)$ matched to the measurement functionals that enables us to decouple the two components of the solution. The construction of such a system is always possible as asserted by Lemma \ref{Prop:DirectSum} whose proof is given in Appendix \ref{App:MeasurementSpace}.

\begin{lemma}[Direct-sum decomposition of the measurement space]
\label{Prop:DirectSum}
Let $\V \nu=(\nu_1,\dots,\nu_M)$ be a vector of linear functionals with $\nu_m \in \Spc H'_\Lop$
and let $\V p=(p_1,\dots,p_{N_0})$ be a basis of $\Spc N_{\V p}$ such that the matrix
$\M P=\V \nu(\V p) \in \R^{M \times N_0}$ is of rank $N_0$.
Then, one can always find three matrices $\widetilde{\M P}\in \R^{M \times N_0}$, $\M Q\in \R^{M \times (M-N_0)}$, and $\widetilde{ \M Q} \in \R^{M \times (M-N_0)}$ 
such that 
\begin{align}
\label{Eq:Biortho2}
\left[
\begin{array}{c}
 \widetilde{\M Q}^\Tr\\
    \widetilde{\M P}^\Tr
 \end{array}
\right] \left[
\begin{array}{cc}
 {\M Q} &     {\M P} 
 \end{array}
\right]
=\M I_M.
\end{align}
Based on these matrices, $\V \nu$ has the unique, reversible decomposition
\begin{align}
\label{Eq:Decompnu}
\V \nu
= {\M Q} \widetilde{\V \nu} + {\M P} \widetilde{\V \phi},
\end{align}
where
\begin{align}
\label{Eq:ReducedBasis}
\widetilde{\V \nu}&=(\widetilde{\nu}_1,\dots, \widetilde{\nu}_{M-N_0})=\widetilde{\M Q}^\Tr\V \nu  \in (\Spc N_{\V \nu} \cap \Spc H_\Op A)^{M-N_0}\\
\label{Eq:ReducedBasis2}\widetilde{\V \phi}&=(\widetilde{\phi}_1,\dots, \widetilde{\phi}_{N_0})=\widetilde{\M P}^\Tr\V \nu
\in (\Spc N_{\V \nu})^{N_0},
\end{align}
with $\widetilde{\V \nu}(\V p)=\M 0$ (orthogonality) and $\widetilde{\V \phi}(\V p)=\widetilde{\M P}^\Tr\M P=\M I_{N_0}$ (biorthonormality).
\end{lemma}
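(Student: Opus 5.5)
The construction is pure linear algebra in $\R^M$. Once the matrices exist, the functional statements follow from linearity of the pairing $\langle \nu_m, p_n\rangle$ and from the characterization $\Spc H_{\Op A}=\{\nu\in\Spc H'_\Lop:\V p(\nu)=\V 0\}$ recalled in the proof of Proposition~\ref{Prop:RangeA}.

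\emph{Construction of the matrices.} Since $\M P\in\R^{M\times N_0}$ has rank $N_0$, its column space has dimension $N_0$. I choose $\M Q\in\R^{M\times(M-N_0)}$ whose columns complete those of $\M P$ to a basis of $\R^M$; a convenient explicit choice is an orthonormal basis of $\ker \M P^\Tr$. Then $\M C=[\M Q\ \M P]\in\R^{M\times M}$ is invertible. I define $\widetilde{\M Q}$ and $\widetilde{\M P}$ as the block rows of $\M C^{-1}$:
\begin{align*}
\left[\begin{array}{c}\widetilde{\M Q}^\Tr\\ \widetilde{\M P}^\Tr\end{array}\right]=\M C^{-1},
\end{align*}
which is exactly \eqref{Eq:Biortho2}. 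For the orthogonal choice of $\M Q$ one can even write these explicitly as $\widetilde{\M P}=\M P(\M P^\Tr\M P)^{-1}$ and $\widetilde{\M Q}=\M Q$, using the invertibility of the normal matrix noted after \eqref{Eq:LSpol}. The blocks of \eqref{Eq:Biortho2} encode the identities
\begin{align*}
\widetilde{\M Q}^\Tr\M Q=\M I,\quad \widetilde{\M Q}^\Tr\M P=\M 0,\quad \widetilde{\M P}^\Tr\M Q=\M 0,\quad \widetilde{\M P}^\Tr\M P=\M I_{N_0}.
\end{align*}
Since a left inverse of a square matrix is also a right inverse, we also have $\M Q\widetilde{\M Q}^\Tr+\M P\widetilde{\M P}^\Tr=\M I_M$.

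\emph{Decomposition of $\V\nu$.} I set $\widetilde{\V\nu}=\widetilde{\M Q}^\Tr\V\nu$ and $\widetilde{\V\phi}=\widetilde{\M P}^\Tr\V\nu$. These are finite linear combinations of the $\nu_m$, so they lie in $\Spc N_{\V\nu}\subset\Spc H'_\Lop$. The right-inverse identity gives $\M Q\widetilde{\V\nu}+\M P\widetilde{\V\phi}=(\M Q\widetilde{\M Q}^\Tr+\M P\widetilde{\M P}^\Tr)\V\nu=\V\nu$, which is \eqref{Eq:Decompnu}. Reversibility and uniqueness follow from the invertibility of $\M C$: the pair $(\widetilde{\V\nu},\widetilde{\V\phi})$ is recovered from $\V\nu$ by applying $\M C^{-1}$, and conversely. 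The linear independence of the $\nu_m$ implies that the new functionals are linearly independent too.

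\emph{Orthogonality and membership.} Pairing with $\V p$ gives
\begin{align*}
\widetilde{\V\nu}(\V p)=\widetilde{\M Q}^\Tr\V\nu(\V p)=\widetilde{\M Q}^\Tr\M P=\M 0,\qquad
\widetilde{\V\phi}(\V p)=\widetilde{\M P}^\Tr\M P=\M I_{N_0}.
\end{align*}
The first identity is the orthogonality claim and the second is the biorthonormality claim. Since $\widetilde\nu_m\in\Spc H'_\Lop$ and $\V p(\widetilde\nu_m)=\V 0$, the characterization of $\Spc H_{\Op A}$ as the annihilator of $\Spc N_{\V p}$ within $\Spc H'_\Lop$ places $\widetilde\nu_m$ in $\Spc N_{\V\nu}\cap\Spc H_{\Op A}$. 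This is \eqref{Eq:ReducedBasis}.

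The only point needing care is exactly this last identification. It relies on $\Spc H'_\Lop=\Spc H_{\Op A}\oplus\Spc N_{\V\phi}$, together with the fact that $\V p$ vanishes on $\Spc H_{\Op A}$ and is nondegenerate on $\Spc N_{\V\phi}$. I would also remark that $\widetilde{\V\phi}\subset\Spc H'_\Lop$, so $(\widetilde{\V\phi},\V p)$ is an admissible, in general non-universal, biorthogonal system in the sense of Item~\ref{Item:NormEquiv} of Theorem~\ref{Theo:NativeSpace}.
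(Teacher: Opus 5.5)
Your proposal is correct and follows the paper's proof essentially step for step. You complete $\M P$ by $\M Q$ to an invertible $[\M Q\ \M P]$, take $\widetilde{\M Q}^\Tr,\widetilde{\M P}^\Tr$ as the block rows of its inverse, recover $\V\nu$ through the right-inverse identity (which is the paper's inversion of the linear system), and pair with $\V p$ to obtain orthogonality and biorthonormality, with the same canonical choice $\widetilde{\M P}=\M P(\M P^\Tr\M P)^{-1}$, $\widetilde{\M Q}=\M Q$. Your explicit argument for $\widetilde\nu_m\in\Spc H_{\Op A}$ via $\Spc H_{\Op A}=\{\nu\in\Spc H'_\Lop:\V p(\nu)=\V 0\}$ fills in a step that the paper's appendix leaves implicit.
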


Formally, the result in Lemma \ref{Prop:DirectSum} is equivalent to the direct-sum decomposition of the measurement space
as $\Spc N_{\V \nu}=\Spc N_{\widetilde{\V \nu}} \oplus \Spc N_{\widetilde{\V \phi}}$, where $\Spc N_{\widetilde{\V \nu}}$ annihilates $\Spc N_\V p$ and where $\widetilde{\V \phi}(\V p)=\M I_{N_0}$ (biorthogonality).

\begin{proof}[Proof of Theorem \ref{Theo:GeneralRepSemiHilbert}]
Equipped with such a problem-specific biorthogonal system $(\widetilde{\V \phi},\V p)$, we invoke Theorem \ref{Theo:NativeSpace} to decompose $f \in \Spc H_\Lop$ as $f=\tilde f + \tilde p$ with $\tilde f \in \Spc H_{\Lop, \widetilde{\V \phi}}$ and $\tilde p=\sum_{n=1}^{N_0} b_n p_n \in \Spc N_\V p$. In this configuration, $\V \nu=\M Q \widetilde{\V \nu}+ \M P \widetilde{\V \phi}$ with $\widetilde{\V \nu}=\widetilde{\M Q}^\Tr \V \nu \in \Spc H_\Op A^{M-N_0}$ and $\widetilde{\V \phi}=\widetilde{\M P}^\Tr \V \nu$ where the matrices $\M P,\widetilde{\M P}\in \R^{M \times N_0}$ and $\widetilde{\M Q}, \M Q \in \R^{M \times (M-N_0)}$ satisfy the invertibility relation \eqref{Eq:Biortho2}. Consequently,
\begin{align*}
\V \nu(f)=\big(\M Q \widetilde{\V \nu}+ \M P \widetilde{\V \phi}\big)(\tilde f + \tilde p)=\M Q\widetilde{\V \nu}(\tilde f)+ \M P \M b
\end{align*}
where we have made use of the underlying biorthogonality properties; namely, (i) $\widetilde{\V \phi}(\tilde f)=\V 0$  for all $\tilde f \in \Spc H_{\Lop, \widetilde{\V \phi}}$, (ii) $\widetilde{\V \nu}(p)=\V 0$  for all $p \in \Spc N_\V p$ (because $\widetilde{\V \nu}(\V p)=\widetilde{\M Q}^\Tr\V \nu(\V p)=\widetilde{\M Q}^\Tr\M P=\M 0$), and (iii) $\langle \widetilde{\phi}_n, \sum_{m=1}^{N_0} b_m p_m\rangle= b_n$. Since $\|\Lop f\|^2_{L_2}=\|\Lop \tilde f\|^2_{L_2}$, this enables us to rewrite the optimization problem \eqref{Eq:GenericOptimizationProb2} as
\begin{align}
\min_{(\tilde f, \M b)\in \Spc H_{\Lop, \widetilde{\V \phi}}\times  \R^{N_0}}& J(\tilde f, \M b) \nonumber\\ \mbox{ where }&
J(\tilde f, \M b)=E\Big(\M y, \M Q\widetilde{\V \nu}(\tilde f)+ \M P \M b\Big) + \lambda \|\Lop \tilde f\|^2_{L_2}.\label{Eq:Jmin}
\end{align}
Next, we observe that the functional
$\M b \mapsto J(\tilde f, \M b)$ with $\tilde f$ fixed is lower-semi\-continuous (l.s.c.), coercive (because $\M P$ is of full column rank) and convex on $\R^{N_0}$.
Likewise, $\tilde f \mapsto J(\tilde f, \M b)$ with $\M b$ fixed is l.s.c. (because of the continuity of $\V \nu: \Spc H\to \R^M$), coercive and {\em strictly convex} on $\Spc H_{\Lop, \widetilde{\V \phi}}$ because of the presence of the second term ($L_2$-norm). This implies that the global functional $(\tilde f , \M b)\mapsto J(\tilde f, \M b)$ is l.s.c., coercive and strictly convex, which ensures that the minimizer of \eqref{Eq:Jmin} exists and is unique \cite{Ekeland1999}.

This minimizer is denoted by $(\tilde f_\lambda, \M b_\lambda)$ and is such that
\begin{align}
\M Q\widetilde{\V \nu}(\tilde f_\lambda)+ \M P \M b_\lambda=
 \left[
\begin{array}{cc}
 {\M Q}  &  {\M P} 
 \end{array}
\right] \left[
\begin{array}{c}
\widetilde{\V \nu}(\tilde f_\lambda)  \\
\M b_\lambda 
 \end{array}
\right]
=\M z_0 \in \R^M,
\end{align} which fixes the first term of the criterion to some constant $E(\M y,\M z_0)$. By exploiting the invertibility of the underlying matrix, we get that
\begin{align}
\left[
\begin{array}{c}
\widetilde{\V \nu}(\tilde f_\lambda)  \\
\M b_\lambda 
 \end{array}
\right]=
 \left[
\begin{array}{cc}
 {\M Q}  &  {\M P} 
 \end{array}
\right]^{-1} \M z_0 =\left[
\begin{array}{c}
 \widetilde{\M Q}^\Tr \M z_0\\
    \widetilde{\M P}^\Tr \M z_0
 \end{array}
\right],
\end{align}
which directly yields $\M b_\lambda=  \widetilde{\M P}^\Tr \M z_0 \in \R^{N_0}$. As for $\tilde f_\lambda$, we identify it as the solution of the simpler generalized interpolation problem
\begin{align}
\tilde f_\lambda=\arg \min_{\tilde f \in \Spc H_{\Lop,\widetilde{\V \phi}}} \|\Lop \tilde f\|_{L_2} \ \mbox{ s.t.} \ \  
\widetilde{\V \nu}(\tilde f)= \widetilde{\M Q}^\Tr \M z_0 \in \R^{M-N_0},
\label{Eq:geninterpol2}
\end{align}
where $\|\Lop \tilde f\|_{L_2}$ coincides with the norm of the Hilbert space $\Spc H_{\Lop,\widetilde{\V \phi}}$ (see Theorem \ref{Theo:NativeSpace}, Item \ref{Item:DirectSum}). We can now invoke the Riesz representer Theorem \ref{Theo:MinNorm} to obtain the parametric form of the solution as
\begin{align}
\label{Eq:SolutionParam2}
\tilde f_{\lambda}=\Op A_{\widetilde{\V \phi}}\left\{\sum_{m=1}^{M-N_0} \tilde a_m \widetilde{\nu}_m\right\} =\sum_{m=1}^{M-N_0} \tilde a_m \Op A_{\widetilde{\V \phi}}\{\widetilde{\nu}_m\}
\end{align}
where $\Op A_{\widetilde{\V \phi}}$ is the Riesz map $\Spc H_\Op A \to \Spc H_{\Lop,\widetilde{\V \phi}}$, and where
$\tilde f^\ast_{\lambda}=\sum_{m=1}^{M-N_0} \tilde a_m \widetilde{\nu}_m$ is the Hilbert conjugate of
$\tilde f_{\lambda}$. Since $\widetilde{\psi}_m=\Op A_{\widetilde{\V \phi}}\{\widetilde{\nu}_m\}$, the
right-hand side of \eqref{Eq:SolutionParam2} is precisely the first sum of \eqref{Eq:f0lin2}. The
desired result is obtained by adding back the optimal null-space component
$\tilde p=\sum_{n=1}^{N_0} b_n p_n \in \Spc N_\V p$ with coefficient $\M b_\lambda$.

To get the more standard overparametrized form \eqref{Eq:f0lin2b}, we make use of
Proposition \ref{Prop:RangeA}, which gives $\Op A\{\tilde f^\ast_\lambda\} \in \Spc H_\Lop$ with
$\Op A\{\tilde f^\ast_\lambda\}-\Op A_{\widetilde{\V \phi}}\{\tilde f^\ast_\lambda\} \in \Spc N_{\V p}$;
the null-space coefficients $\M b$ of \eqref{Eq:f0lin2b} therefore differ from those of
\eqref{Eq:f0lin2} by that component. We then note that 
$\tilde f^\ast_{\lambda}=\widetilde{\M a}^\Tr \widetilde{\V \nu}=\widetilde{\M a}^\Tr \widetilde{\M Q}^\Tr\V \nu=\M a^\Tr \V \nu$ where $\M a=\widetilde{\M Q} \widetilde{\M a} \in \R^M$. The required orthogonality condition $\V p(\tilde f^\ast_{\lambda})=0$ is met implicitly because
$\M P^\Tr\M a=\M P^\Tr\widetilde{\M Q}\widetilde{\M a}=\V 0$. Conversely, we have that
$\widetilde{\M a}=\M Q^\Tr \M a$ if and only if $\M P^\Tr \M a=\M 0$. The matrix form of these equivalences is
\begin{align}
\M a = \left[
\begin{array}{c c}
 \widetilde{\M Q}   &  \widetilde{\M P}
 \end{array}
\right] \left[
\begin{array}{c}
\widetilde{\M a}\\
 \M 0
 \end{array}
\right] \quad \Leftrightarrow \quad \left[
\begin{array}{c}
\widetilde{\M a}\\
 \M 0
 \end{array}
\right] = \left[
\begin{array}{c}
\M Q^\Tr\\
   \M P^\Tr
 \end{array}
\right] \M a,
\end{align}
which comes as a direct consequence of \eqref{Eq:Biortho2}.
\end{proof}

One obtains an exact fit, as in \eqref{Eq:GsplineInt}, either by letting $\lambda\to 0$ or by selecting $E$ to be a barrier function that returns
$0$ if $\V \nu(f)=\M y$ and $+\infty$ otherwise. At the other end of the scale, for $\lambda\to \infty$, one gets a linear regression with $f_\infty \in \Spc N_{\V p}$.

Theorem \ref{Theo:GeneralRepSemiHilbert} is a variation around a classic theme in spline theory
that goes back to Schoenberg and de Boor \cite{Schoenberg1964,deBoor1966}. The present form is
probably the most general one to date, as it applies to a remarkably broad family of native
spaces $\Spc H_\Lop$ and to arbitrary continuous linear measurements
$\V \nu: \Spc H_\Lop \to \R^M$. While the two parametrizations are equivalent, they each have
their specificities. The conventional form \eqref{Eq:f0lin2b} does not require the choice of any
boundary functionals $\widetilde{\V \phi}$; the price is the orthogonality constraint
$\M P^\Tr\M a=\M 0$ ($N_0$ additional linear equations), which tames the basis functions
$\psi_m=\Op A\{\nu_m\}$, which, in general\footnote{The condition $\V p(\nu_m)=\V 0$ cannot be
met for all $m$ because it contradicts the rank requirement in
Definition \ref{Def:AdmisPoints} that makes the problem well-posed.}, are not included in
$\Spc H_\Lop$ \big(see \eqref{Eq:Afprop} in Proposition \ref{Prop:RangeA}\big): the constraint
produces the cancellation of the improper trends at infinity that ultimately ensures that
$f_\lambda \in \Spc H_\Lop$. The adapted form \eqref{Eq:f0lin2}, by contrast, is explicit and
inherently stable, in the sense that
$\widetilde \psi_m=\Op A_{\widetilde{\V \phi}}\{\widetilde{\nu}_m\} \in \Spc H_\Lop$, at the
cost of the construction of the basis in Lemma \ref{Prop:DirectSum}. This non-standard
parametrization is also the key for making the connection with a corresponding class of
generalized Gaussian processes, which would otherwise be ill-defined.

\subsection{Link with RKHS and Conventional Splines}
\label{Sec:RKHS}
We can make the link with reproducing-kernel Hilbert spaces when $\Spc H$ is a Hilbert space of continuous functions on $\R^d$
on which the sampling functionals are well-defined, with $\nu_m=\delta_{\V x_m} \in \Spc H'$ for any $\V x_m \in \R^d$.
In that setting with $\Spc N_{\V p}=\{0\}$, the {\em reproducing kernel} $h: \R^d \times \R^d \to \R$ is the Schwartz kernel (a.k.a.\ generalized impulse response) of the symmetric, positive-definite operator $\Op A$ of Theorem \ref{Theo:NativeSpace}, whose action is described explicitly by the integral formula
\begin{align}
\Op A\{\varphi\}(\V x)=\int_{\R^d} h(\V x,\V y) \varphi(\V y) \dint \V y.
\end{align}

If we now consider the corresponding interpolation/approximation problem \eqref{Eq:GenericOptimizationProb2} with $\langle\nu_m, f\rangle=\langle \delta_{\V x_m}, f\rangle=f(\V x_m)$
at distinct locations $\V x_1, \cdots,\V x_M \in \R^d$, we find that the basis functions in \eqref{Eq:f0lin2} are given by
$\psi_m=\Op A\{\delta_{\V x_m}\}=h(\cdot,\V x_m)$.
This yields $f_\lambda(\V x)=\sum_{m=1}^M a_m h(\V x,\V x_m)$, which has the form of the kernel estimators used in machine learning.

When $\Lop$ is linear shift-invariant (LSI), then its Green's operator and its conditional inverse
are shift-invariant as well, and both act by convolution with
$\Lop^{-1}: \varphi \mapsto \rho_\Lop \ast \varphi$ and
$\Op A=(\Lop^\ast\Lop)^{-1}: \varphi \mapsto \rho_{\Lop^\ast\Lop} \ast \varphi$, where
$\rho_\Lop=\Lop^{-1}\{\delta\}$ and $\rho_{\Lop^\ast\Lop}=(\Lop^\ast\Lop)^{-1}\{\delta\}$ are the
Green's functions of $\Lop$ and $\Lop^\ast\Lop$, respectively.
Since LSI operators are diagonalized by the Fourier transform, with
$\Op T_{\rm LSI}\{\ee^{\jj \langle \cdot, \bw\rangle}\}=\widehat {T}_{\rm LSI}(\bw)\,\ee^{\jj \langle \cdot, \bw\rangle}$
where $\widehat {T}_{\rm LSI}=\Fourier\big\{\Op T_{\rm LSI}\{\delta\}\big\}$ is the frequency
response, we readily deduce that the Schwartz kernel of
$\Op A=(\Lop^\ast\Lop)^{-1}=\Op A_{\rm LSI}$ has the convolutional structure
\begin{align}
\label{Eq:LSIKernel}
h_{\rm LSI}(\V x,\V y)=\rho(\V x- \V y), \quad \rho=\Fourier^{-1}\left\{\frac{1}{|\widehat L|^2}\right\}
\end{align}
where $\widehat L(\bw)$ (resp., $|\widehat L(\bw)|^2$) is the frequency response of $\Lop$ (resp., of $\Lop^\ast\Lop$).
Here, $\rho=\rho_{\Lop^\ast\Lop}$ is the symmetric Green's function of $(\Lop^\ast\Lop)$ whose existence (as well as the well-posedness of the underlying inverse Fourier transform) is guaranteed from the admissibility assumption. The spectral representation of $\Op A_{\rm LSI}$ with $\widehat{A}_{\rm LSI}(\bw)=\frac{1}{|\widehat L(\bw)|^2}\ge 0$ also suggests that $\Op A_{\rm LSI}$ is positive semi-definite by default (resp., positive definite if $\frac{1}{|\widehat L(\bw)|^2}>0$ for all $\bw \in \R^d$)
under some implicit assumption of integrability;
e.g., $1/\widehat L\in L_2(\R^d)$ which ensures that the inverse operator $\Lop^{-1}: L_2 \to \Spc H_\Lop$ is well-defined and that the Green's function $\rho_{\Lop^\ast\Lop}: \R^d \to \R$ is continuous.

It is instructive to detail these computations for the representative example of the derivative
$\Dop=\frac{\dint}{\dint x}$, whose symbol is
$\widehat D(\omega)=\jj \omega=\Fourier\{\delta'\}(\omega)$. This operator is associated with the
kernel $\rho_{\Dop^\ast\Dop}(x)=-\tfrac{1}{2}|x|=\Fourier^{-1}\{\frac{1}{|\omega|^2}\}(x)$, with
the corresponding conditional inverse being
$\Op A_\Op D=(\Op D^\ast\Op D)^{-1}: \varphi \mapsto \int_\R -\tfrac{1}{2}|\tau | \varphi(\cdot - \tau)\dint \tau$.
The latter is symmetric and $(1)$-conditionally positive because
$\langle \Op A_\Op D \varphi,\varphi\rangle=\int_\R \frac{|\hat \varphi(\omega)|^2}{\omega^2}\dint \omega>0$
for all $\varphi \in \Spc S(\R)\backslash \{0\}$ such that
$\langle 1, \varphi\rangle=\hat \varphi(0)=0$, where $\hat \varphi=\Fourier\{\varphi\}$ is the
Fourier transform of $\varphi$. Let us note that the latter orthogonality condition neutralizes the
singularity of the Fourier-domain integral at the origin. This also enables us to assert that the
map $\Dop^\ast|_\Spc S=-\Op D: \Spc S \to \Spc S_{(1)}$ is surjective with
$\Spc S_{(1)}=\Dop^\ast(\Spc S)\subset\Dop^\ast(L_2)$, which confirms the admissibility of $\Dop$.
The prototypical family of regular shift-invariant operators that are $\V p$-admissible with a null
space formed of polynomials is that of the $m$th-derivative operators
$\Dop^m: \Spc S(\R) \to \Spc S(\R)$, whose relevant components are listed in
Table \ref{Tab:Derivatives}.

\begin{table}
\setlength{\tabcolsep}{3pt}
\begin{tabular}{p{0.6cm}p{2.6cm}p{3.4cm}p{1.2cm}p{3.2cm}}
 \hline\\[-2ex]
    $\Lop$ &$\rho_{\Lop}$ & $\rho_{\Lop^\ast\Lop}$ &$N_0$ & $\{(p_n,\phi_n)\}_{n=1}^{N_0}$\\[1ex]
    \hline\\[1ex]
    $\Dop$ & $\Indic_{[0,\infty)}(x)=x_+^0$ & $-\tfrac{1}{2}|x|$ & $1$ & $\left\{\big(p_1(x)=1,\ \phi_1=\delta\big)\right\}$ \\[2ex]
     $\Dop^m$ & $\frac{x_+^{m-1}}{(m-1) !}$ & $\tfrac{(-1)^m}{2}\frac{|x|^{2m-1}}{(2m-1)!}$ & $m$ & $\left\{(\frac{x^{n-1}}{(n-1)!}\,,\delta^{(n-1)})\right\}$ \\   
\\
     $\Dop^\alpha$ & $\frac{x_+^{\alpha-1}}{\Gamma(\alpha)}$ & $ \tfrac{-1}{2 \sin\big(\frac{\pi }{2} (2\alpha-1)\big)}\frac{|x|^{2\alpha-1}}{\Gamma(2\alpha)}$ & $\lfloor \alpha+\tfrac{1}{2}\rfloor$ & $\left\{(\frac{x^{n-1}}{(n-1)!}\,,\delta^{(n-1)})\right\}$\\[2ex]        \hline
\end{tabular} \caption{Differential operators encountered in spline theory with associated Green's functions
and biorthogonal systems. The derivative operators here are either ordinary with $m \in \N$ or fractional with $\alpha \in \R^+$ and the Fourier symbol of $\Op D^\alpha$ being $(\jj \omega)^\alpha$ \cite{Unser2000a}. The size of the null space follows the general rule of Part I, which identifies the attainable null space of a symbol of order $\gamma$ on $\R^d$ as the space $\Pi_{k_\gamma}$ of polynomials of degree at most $k_\gamma=\lfloor \gamma-\tfrac{d}{2}\rfloor$ (Part I, Theorem \ref{I-Thm:PracticalChecklist} and Proposition \ref{I-Prop:MultiplierConstruction}); for $d=1$ and $\gamma=\alpha$, this gives $N_0=k_\alpha+1=\lfloor \alpha+\tfrac{1}{2}\rfloor$, which is the integer that is nearest to $\alpha$ and which reduces to $m$ for $\alpha=m \in \N$. The half-integer orders $\alpha \in \N+\tfrac{1}{2}$ are excluded, for the constant in $\rho_{\Lop^\ast\Lop}$ is then singular and the Green's function acquires a logarithmic factor.
\label{Tab:Derivatives} }
\end{table}

The biorthogonal systems of Table \ref{Tab:Derivatives} deserve a comment, for they are no longer
non-universal as in Part I: the functionals $\phi_n=\delta^{(n-1)}$ are in $\Spc H'_\Lop$, but not in $\Spc S$.
This is precisely the configuration that Item \ref{Item:NormEquiv} of
Theorem \ref{Theo:NativeSpace} covers, and it is what makes the leading example of the theory
compatible with our description of the native space.

The LSI structure of the kernel in \eqref{Eq:LSIKernel} with $\rho=\Fourier^{-1}\{\widehat A_{\rm LSI}\}$ is also achievable by less constrained configurations that do not explicitly involve a regularization operator $\Lop$. Such cases are covered by a standard result in interpolation theory: a shift-invariant kernel $h_{\rm LSI}(\V x,\V y)=\rho(\V x- \V y)$ is the reproducing kernel of a Hilbert space if and only if $\Fourier\{\rho\}=\widehat A_{\rm LSI}$ (the Fourier transform of $\rho$) is a non-negative finite Radon measure on $\R^d$. The latter statement is the RKHS variant of Bochner's theorem \cite{Berlinet2004}, which is itself a special case of Minlos' theorem, as discussed in Part III \cite{Unser2026c} with $\Spc X=\R^d$. In particular, if $\widehat A_{\rm LSI}$ can be identified as a non-negative measurable function on $\R^d$, then the condition reduces to $\widehat A_{\rm LSI} \in L_1(\R^d)$ (integrability). Bochner's theorem then implies that the resulting function $\rho: \R^d \to \R$ is continuous, and positive (semi-)definite in the sense of Part III \cite{Unser2026c}, with $\Spc X=\R^d$.

The spectral representation of $\Lop$ also gives insight into the more challenging configurations where the null space of $\Lop$ is non-trivial. Basically, the only way to achieve a finite-dimensional null space is to
annihilate a finite number of eigenvalues, which, for LSI operators, translates into $\widehat{L}(\bw)$ having a finite number of zeros.
These zeros can be concentrated at the same location, as in the case of
the $n$th derivative $\Op D^n$, whose frequency response $\widehat{D}^n(\omega)=(\jj \omega)^n$ has a zero of multiplicity $n$ at $\omega=0$ with the underlying sinusoid being the constant $1=\ee^{\jj 0}$. A single zero contributes a sinusoidal component to the null space, while a multiple one contributes a subspace of modulated polynomials (i.e., the product of the underlying sinusoid and a series of monomials). The presence of such zeros  makes
$\frac{1}{|\widehat L(\bw)|^2}$ singular and therefore spoils the Bochner integrability criterion.
While this tells us that the conditional inverse of a $\V p$-admissible operator cannot be positive definite, it does not compromise the existence of a symmetric operator $\Op A_{\rm LSI}: \Spc H' \to \Spc S'$ whose kernel is also given by \eqref{Eq:LSIKernel}, but with an appropriate domain extension. Formally, this amounts to replacing the classical inverse Fourier transform $\Fourier^{-1}: L_1(\R^d)\to C_0(\R^d)$ by its generalized counterpart $\Fourier^{-1}: \Spc S'(\R^d)\to \Spc S'(\R^d)$ that can accommodate singularities. 

While a $\V p$-admissible LSI operator has many possible conditional inverses (with the corresponding Green's functions differing by a null-space component), it has a canonical inverse, which is the one obtained by inverse Fourier transformation. In particular, the Green's function $\rho$ and shift-invariant kernel specified by \eqref{Eq:LSIKernel} is unique and necessarily symmetric. Some representative examples for $\Spc S=\Spc S(\R)$ are shown in Table \ref{Tab:Derivatives}. 
Again, under the condition that $\nu_m=\delta_{\V x_m} \in \Spc H'_{\Lop}$ with $\V x_m \in \R^d$, which is equivalent to $\Spc H_\Lop$ being an RKHS, we can then make a direct connection with spline approximation in $\R^d$ by noting that
\eqref{Eq:f0lin2b} then takes the form
\begin{align}
f_\lambda(\V x)=\sum_{m=1}^M a_m \rho(\V x-\V x_m) +
 \sum_{n=1}^{N_0} b_n p_n(\V x),
\end{align}
where $\Spc N_\V p$ is a space of (possibly modulated) polynomials. The resulting function is a $\Lop^\ast\Lop$-spline
with knots (or break points) at the $\V x_m$.
In particular, by plugging in the explicit formulas and parameters for $\Lop=\Dop^N$ in Table \ref{Tab:Derivatives}, we find that 
\begin{align}
f_\lambda(x)=\sum_{m=1}^M a_m |x-x_m|^{2N-1} +
 \sum_{n=1}^{N} b_n x^{n-1},
\end{align}
which is the classic formula of a polynomial spline of degree $(2N-1)$. The same can also be done for the $\V p$-admissible operator $\Op D^{\gamma}$ (fractional derivative of order $\gamma>\tfrac{1}{2}$, so that $N_0=\lfloor \gamma+\tfrac{1}{2} \rfloor\ge 1$), which yields fractional splines \cite{Unser2000a} in the regime $\gamma \notin \N$ where the operator is non-regular.  

\subsection{Exact Discretization}
\label{Sec:Discretization}
The key outcome of Theorem \ref{Theo:GeneralRepSemiHilbert} is that the solutions of a broad class of infinite-dimensional optimization problems in Hilbert spaces lie in a finite-dimensional subspace whose structure depends on the problem formulation, but not on the data. This remarkable property allows us to reformulate the original problem as an equivalent finite-dimensional minimization problem (an exact discretization), which can then be solved using standard numerical optimization techniques.

Because the underlying parametrization is linear, the resulting formulation also suggests that, in the case of an exact fit or a least-squares criterion, this class of problems admits a closed-form solution, irrespective of the particular Hilbert space $\Spc H_\Lop$ in which the solution is defined.
\subsubsection{Tikhonov Regularization and Ridge Regression}
\label{Sec:Tikhonov}
It is instructive to start with the simplest instance of the framework with $N_0=0$ (trivial null
space) and the least-squares loss
\begin{align}
E_{\rm LS}\big(\M y, \V \nu(f)\big)=\sum_{m=1}^M |y_m -\langle \nu_m, f\rangle|^2,
\label{Eq:RidgeLS}
\end{align}
which corresponds to the classical setting of Tikhonov regularization in inverse problems, also
known as ridge regression in statistics. Since $\Spc N_{\V p}=\{0\}$, the matrix $\M P$ is void and
the two parametrizations of Theorem \ref{Theo:GeneralRepSemiHilbert} collapse into the single
expansion $f_\lambda=\sum_{m=1}^M a_m \psi_m$ with $\psi_m=\Op A\{\nu_m\}$. The system matrix
$\M G \in \R^{M \times M}$ with $[\M G]_{m,n}=\langle \nu_m,\psi_n \rangle$ is then the Gram matrix
of the basis $\{\psi_m\}$, because the duality relation \eqref{Eq:RieszPairing} with $\nu=\nu_m$
and $f=\psi_n=\nu^\ast_n$ gives
$\langle \nu_m, \psi_n\rangle=\langle \nu^\ast_m, \psi_n\rangle_{\Spc H_\Lop}=\langle \psi_m, \psi_n\rangle_{\Spc H_\Lop}$.
In particular, $\M G$ is symmetric and positive-definite, the latter being inherited from the linear
independence of the $\nu_m$ through the Riesz pairing. The general treatment of
Section \ref{Sec:InvLS} then reduces to
\begin{align}
\label{Eq:TikAbstract}
f_\lambda= \sum_{m=1}^M a_m \psi_m\quad  \mbox{ with }\quad \M a=(\M G + \lambda \M I)^{-1} \M y,
\end{align}
which is the familiar ridge-regression estimator. The general case that we consider next differs
from this one in a single, but consequential, respect: the presence of $\Spc N_{\V p}$ brings in the
orthogonality constraint of \eqref{Eq:f0lin2b}.
\subsubsection{General Scenario with Non-Trivial Null Space}
Under the assumption that the $\Lop$-compatibility condition in Definition \ref{Def:AdmisPoints} is met, we know from Theorem \ref{Theo:GeneralRepSemiHilbert} that the solution lives in the $M$-dimensional subspace spanned by the basis functions
$\{\widetilde{ \psi}_m\}_{m=1}^{M-N_0} \cup \{p_n\}_{n=1}^{N_0}$. 
Accordingly, we discretize the reconstruction problem
by plugging \eqref{Eq:f0lin2} into the cost functional. Since the measurement model is linear, it suffices to determine the system matrix $\widetilde{\M  G} \in \R^{(M-N_0) \times (M-N_0)}$ for the primary basis functions $\widetilde{ \psi}_m=\Op A_{\widetilde{\V \phi}}\{\widetilde\nu_m\}$ with $\widetilde\nu_m \in \Spc H_\Op A$. Specifically, we have that
\begin{align}
\label{Eq:SysMatTilde}
[\widetilde{\M G}]_{m,n}%
:=\langle\widetilde{\nu}_m,\widetilde{\psi}_n \rangle=\langle\widetilde{\nu}_m,\Op A_{\widetilde{\V \phi}} \{\widetilde{\nu}_n\} \rangle, \quad \widetilde{\V \nu}=(\widetilde \nu_m)=\widetilde{\M Q}^\Tr \V \nu,
\end{align}
with the remarkable property that the matrix $\widetilde{\M G}$ is guaranteed to be positive-definite.
Let us note that $\Op A_{\widetilde{\V \phi}}$ may be substituted by $\Op A$ in \eqref{Eq:SysMatTilde} without affecting $\widetilde{\M G}$, because the two differ by an element of $\Spc N_{\V p}$ and $\widetilde{\V \nu}(\V p)=\M 0$. The positive-definiteness comes as follows. Since $\widetilde{\nu}_m,\widetilde \nu_n \in \Spc H_\Op A=(\Spc H_{\Lop,\widetilde{\V \phi}})'$, the operator $\Op A_{\widetilde{\V \phi}}$ restricted to $\Spc H_\Op A$ is the Riesz map $\Spc H_\Op A \to \Spc H_{\Lop,\widetilde{\V \phi}}$ whose inverse is the normal operator
$(\Lop^\ast\Lop): \Spc H_{\Lop,\widetilde{\V \phi}} \to \Spc H_{\Op A}$ (see Theorem \ref{Theo:NativeSpace}, Item \ref{Item:RieszMaps}). This enables us to recognize the entries of $\widetilde{\M G}$
as inner products in $\Spc H_{\Op A}$ (resp., in $\Spc H_{\Lop,\widetilde{\V \phi}}$) with
\begin{align}
\label{Eq:SysMatTilde2}
[\widetilde{\M G}]_{m,n}&=\langle \widetilde{\nu}_m, \widetilde{\nu}_n\rangle_{\Spc H_\Op A}=\langle \widetilde{\nu}_m,\Op A\{\widetilde{\nu}_n\} \rangle \nonumber
\\&=\langle \widetilde{\psi}_m, \widetilde{\psi}_n\rangle_{\Spc H_{\Lop,\widetilde{\V \phi}}}=\langle \Lop \widetilde{\psi}_m, \Lop \widetilde{\psi}_n\rangle.
\end{align}
In addition, the right-hand side of \eqref{Eq:SysMatTilde2} implies that $\widetilde{\M G}$ coincides with the Gram matrix of
the reduced basis $\widetilde{\V \psi}=(\widetilde{ \psi}_m)$. For
\begin{align*}
f=\sum_{m=1}^{M-N_0} \tilde a_m \widetilde{ \psi}_m + \sum_{n=1}^{N_0} b_n p_n,
\end{align*}
this then yields
\begin{align*}
\langle (\Lop^\ast\Lop)\{f\}, f\rangle =\|\Lop f\|^2_{L_2}=\widetilde{\M a}^\Tr \widetilde{\M  G}\widetilde{\M a},
\end{align*}
which discretizes the regularization term exactly. Finally, by decomposing the measurement operator as $\V \nu
= {\M Q} \widetilde{\V \nu} + {\M P} \widetilde{\V \phi}$, we recast the reconstruction problem as a finite-dimensional optimization with
\begin{align}
\label{Eq:NumOpt1}
\min_{(\widetilde{\M a}, \M b) \in \R^{M-N_0} \times \R^{N_0}} E(\M y, \M Q \widetilde{\M G} \widetilde{\M a}+ \M P \M b) + \lambda \widetilde{\M a}^\Tr \widetilde{\M  G}\widetilde{\M a}.
\end{align}
This kind of problem can then be solved
numerically using some basic iterative algorithm (e.g., steepest descent), or in closed form when the data term is quadratic, as in Section \ref{Sec:Tikhonov}.

Based on the right-hand side of \eqref{Eq:SysMatTilde} and by defining the augmented system matrix $\M G \in \R^{M \times M}$ with
$[\M G]_{m,n}=\langle \nu_m,\psi_{n}\rangle=\langle \nu_m,\Op A\{\nu_n\}\rangle$, we also have
\begin{align}
\label{Eq:SysMatTilde3}
\widetilde{\M  G}=\widetilde{\M Q}^\Tr \M G \widetilde{\M Q}.
\end{align}
The latter is consistent with the relation 
$\widetilde{\V \psi}=\widetilde{\M Q}^\Tr \V \psi$,
which relates
the basis functions of the reduced and over-parametrized representations. Likewise, by mapping the reduced coefficients back to the over-parameterized ones, we
find that $\M Q \widetilde{\M G}=\M G \widetilde{\M Q}$, which, together with \eqref{Eq:SysMatTilde3}, enables the reformulation of \eqref{Eq:NumOpt1} in terms of 
 the augmented system matrix $\M G$.

In order to be complete, we also present the overparameterized discretization
\begin{align}
\label{Eq:OverparamNum}
\min_{(\M a, \M b) \in \R^M \times \R^{N_0}} E(\M y, \M G \M a+ \M P \M b) + \lambda \M a^\Tr  \M G \M a  \quad  \mbox{ s.t. }\quad
\M P^\Tr \M a=\M 0,
\end{align}
which goes hand in hand with the reconstruction formula \eqref{Eq:f0lin2b}.
It is worth noting that, unlike $\widetilde{\M G}$, the augmented system matrix $\M G$ is generally
not positive definite, unless the null space of $\Lop$ is trivial, as in
Section~\ref{Sec:Tikhonov}. It is, however, always symmetric, because $\Op A$ is, and it is
positive-definite on the subspace $\{\M a \in \R^M: \M P^\Tr\M a=\M 0\}$, on which
$\M a^\Tr \M G \M a=\|\Lop f\|^2_{L_2}$ by \eqref{Eq:SysMatTilde3}. Let us also observe that,
while the entries of $\M G$ do depend on which conditional inverse is selected, the quadratic form
$\M a^\Tr \M G \M a$ does not, precisely because $\M P^\Tr\M a=\M 0$. This is a second reason why
the orthogonality constraint is structural rather than cosmetic.

The minimization of \eqref{Eq:OverparamNum} is more delicate than that of \eqref{Eq:NumOpt1} because of the additional orthogonality constraint. This constraint is essential to the formulation since it guarantees the equivalence between the continuous-domain and discrete optimization problems. The subtle point is that the identity $\|\Lop f\|^2_{L_2}=\M a^\Tr \M G \M a$ holds only when the orthogonality condition is satisfied\footnote{The implication of \eqref{Eq:Afprop} in Proposition~\ref{Prop:RangeA} is that the true continuous-domain regularization cost $\|\Lop f\|_{L_2}$ becomes unbounded whenever this condition is violated.}. Consequently, the constraint must be enforced systematically throughout the optimization process.
In particular, this rules out the use of basic steepest-descent methods, in contrast with the unconstrained minimization of \eqref{Eq:NumOpt1}. From a purely numerical standpoint, it is also this constraint that ensures uniqueness of the solution in an otherwise overparameterized setting.

\subsubsection{Least-Squares Approximation with Generalized Splines}
\label{Sec:InvLS}
As an illustration of the exact discretization pipeline, we again consider the least-squares criterion \eqref{Eq:RidgeLS}, but now in the more challenging spline setting where the null space of $\Op L$ is non-trivial. 
The resulting estimator is a generalized smoothing spline which, as we now show, admits a closed-form expression in either parametrization.

We begin with the unconstrained formulation \eqref{Eq:NumOpt1}, whose resolution is
straightforward. Since the cost is quadratic, its minimizer is obtained by computing the gradient
with respect to the unknown coefficients, setting it to zero, and solving the resulting linear
system. Upon ``division'' of the first block row by $\widetilde{\M G}$, which is invertible, this yields
\begin{align}
\label{Eq:LSreduced}
\left(
\begin{array}{c c}
\M Q^\Tr \M Q\widetilde{\M G} + \lambda \M I \ \ &  \M Q^\Tr\M P  \\
\M P^\Tr \M Q\widetilde{\M G}  & \M P^\Tr\M P
\end{array}
\right)
\left(
\begin{array}{c}
  \widetilde{\M a}   \\
  \M b
\end{array}
\right)
=\left(
\begin{array}{c}
\M Q^\Tr    \\
  \M P^\Tr
\end{array}
\right) \M y,
\end{align}
which is a well-posed linear system in $\R^M$.

We can do better by working with the canonical decomposition \eqref{Eq:CanonicalBasis} of
Appendix \ref{App:MeasurementSpace}, whose matrices satisfy $\M Q^\Tr\M P=\M 0$ and
$\M Q^\Tr\M Q=\M I_{M-N_0}$. The system \eqref{Eq:LSreduced} then decouples into
\begin{align}
\label{Eq:LSdecoupled}
\widetilde{\M a}=(\widetilde{\M G} + \lambda \M I)^{-1}\M Q^\Tr\M y,
\qquad
\M b=(\M P^\Tr\M P)^{-1}\M P^\Tr\M y.
\end{align}
The interpretation is transparent: The null-space coefficients $\M b$ are the least-squares fit
\eqref{Eq:LSpol} of the data in $\Spc N_{\V p}$, and they do not depend on $\lambda$. The spline
coefficients $\widetilde{\M a}$ are then obtained by a ridge regression on the residual measurements
$\M Q^\Tr\M y$, in exact analogy with \eqref{Eq:TikAbstract}. In particular,
$\widetilde{\M a}\to \M 0$ as $\lambda \to \infty$, which recovers the linear regression
$f_\infty \in \Spc N_{\V p}$ that was announced after
Theorem \ref{Theo:GeneralRepSemiHilbert}.

In the overparametrized setup, the optimal solution is specified as
\begin{align}
J_{\rm LS}(\M a, \M b| \M y,\lambda)&= \|\M G\M a+\M P\M b-\M y\|_2^2 +  \lambda\, \M a^\Tr \M G \M a\\
\arg \min_{(\M a,\M b)\in \R^M\times \R^{N_0}} & J_{\rm LS}(\M a, \M b| \M y,\lambda)\quad \mbox{ s.t. } \quad\M P^\Tr\M a=\M 0. \label{Eq:OverParProblem}
\end{align}
Here, we proceed indirectly by showing that the Karush-Kuhn-Tucker (KKT) conditions of optimality for \eqref{Eq:OverParProblem} are satisfied whenever
\begin{align}
\begin{cases}\label{Eq:opt1}
(\M G + \lambda \M I) \M a + \M P\M b = \M y\\
\M P^\Tr \M a=\M 0,
\end{cases}
\end{align}
because this configuration both enforces the constraint and annihilates the gradient of $J_{\rm LS}(\M a, \M b| \M y,\lambda)$. 
Since $\M G$ is symmetric, the partial derivatives of $J_{\rm LS}(\M a, \M b| \M y,\lambda)$ 
with respect to $\M a$ and $\M b$ are
\begin{align}
\frac{\partial J_{\rm LS}(\M a, \M b| \M y,\lambda)}{\partial \M a}&=2 \M G(\M G\M a+\M P\M b-\M y)+2 \lambda \M G \M a,\nonumber\\
&=2 \M G\Big((\M G + \lambda \M I) \M a + \M P\M b - \M y\Big),\label{Eq:Grada}\\
\frac{\partial J_{\rm LS}(\M a, \M b| \M y,\lambda)}{\partial \M b}&=2 \M P^\Tr(\M G\M a+\M P\M b-\M y).
\label{Eq:Gradb}
\end{align}
Substituting the expression of $\M y$ given by \eqref{Eq:opt1} into
\eqref{Eq:Grada} and \eqref{Eq:Gradb} immediately shows that both partial derivatives vanish, thereby proving the claim.
The remarkable aspect of \eqref{Eq:opt1} is that the associated KKT Lagrange multiplier vanishes.
This is not a coincidence. Eq.~\eqref{Eq:Grada} shows that $\partial J_{\rm LS}/\partial \M a$
factors through $\M G$, so that any solution of the first line of \eqref{Eq:opt1} annihilates that
gradient by itself, without the help of a multiplier, while the constraint is carried by the second
line. It follows that the minimal value of $J_{\rm LS}$ in \eqref{Eq:OverParProblem} coincides with
that of the corresponding unconstrained problem. The difference lies in the multiplicity of
solutions: as soon as $N_0>0$, the pair $(\M a,\M b)$ is overparameterized---there are $M+N_0$
unknowns for $M$ measurements---so that the unconstrained problem admits infinitely many
minimizers. Among them, only one satisfies the orthogonality condition $\M P^\Tr \M a=\M 0$, which
is the solution of \eqref{Eq:OverParProblem}. The system \eqref{Eq:opt1} is indeed invertible,
because $\M P$ is of full column rank and because $(\M G + \lambda \M I)$ is positive-definite on
$\{\M a: \M P^\Tr\M a=\M 0\}$.

The optimal spline is found by solving \eqref{Eq:opt1},  which results in
the closed-form solution
\begin{align*}
\left(
\begin{array}{c}
  \M a   \\
  \M b
\end{array}
\right)=\left(
\begin{array}{c c}
(\M G + \lambda \M I) &  \M P  \\
\M P^\Tr & \M 0
\end{array}
\right)^{-1}
\left(
\begin{array}{c}
 \M y   \\
  \M 0
\end{array}
\right).
\end{align*}
This solution remains valid as $\lambda\to 0$ because the orthogonality property is required in all cases to ensure that the solution is part of the native space.
In the limit, we get the generalized interpolant $f_{0} \in \Spc H_\Lop$ with expansion coefficients $(\M a_{(0)},\M b_{(0)})$, which is such that $\V \nu(f_{0})%
=\M y$.
Based on \eqref{Eq:opt1}, it is then possible to simplify the corresponding spline energy as
\begin{align*}
\|\Lop f_{0}\|^2_{L_2}&=\M a_{(0)}^\Tr \M G \M a_{(0)}= \M a_{(0)}^\Tr  (\M y- \M P \M b_{(0)})= \M a_{(0)}^\Tr  \M y- \M b^\Tr_{(0)}(\M P^\Tr\M a_{(0)})
=\M a_{(0)}^\Tr \M y.
\end{align*}
At the other extreme for $\lambda\to \infty$, we have that
\begin{align*}
\left(
\begin{array}{c}
  \M a_{(\infty)}   \\
  \M b_{(\infty)}
\end{array}
\right)=\lim_{\lambda\to \infty}\left(
\begin{array}{c c}
\lambda \M I &  \M P  \\
\M P^\Tr & \M 0
\end{array}
\right)^{-1}
\left(
\begin{array}{c}
 \M y   \\
  \M 0
\end{array}
\right).
\end{align*}
This yields $ \M a_{(\infty)}=\M 0$ and
\begin{align*}
\M b_{(\infty)}=(\M P^\Tr  \M P)^{-1} \M P^\Tr   \M y,
\end{align*}
where the latter represents the orthogonal projection of $\M y$ in the space spanned by the column
vectors of $\M P$, in agreement with the second formula of \eqref{Eq:LSdecoupled}.
\section{Conclusion}
\label{Sec:Conclusion}
The proposed formulation with its representer theorem
(Theorem \ref{Theo:GeneralRepSemiHilbert}) unifies interpolation, smoothing splines, ridge
regression, Tikhonov regularization, and kernel estimation as special cases of a single
statement over the native space of any $\V p$-admissible operator. The underlying hypotheses delineate the
class---acceptable operators, maximal search space, admissible measurements, and losses---for
which the solutions of these infinite-dimensional problems are confined to a finite-dimensional
subspace, while the exact discretization of Section \ref{Sec:Discretization} provides the common
resolution pipeline, with closed-form solutions when the data-fidelity term is quadratic.

Having the complete class under the same umbrella, in a self-contained manner, brings a
twofold benefit. On the conceptual side, a single mechanism---the interplay between the Riesz
map of the native space and the conditioned basis---explains the parametric form of all these
estimators at once. On the practical side, one implementation serves the whole class: the
concrete work of the practitioner reduces to the determination of the basis functions of
\eqref{Eq:f0lin2} and of the Gram matrix specified by \eqref{Eq:SysMatTilde}, while the rest is
generic linear algebra.
The conditioned parametrization, finally, is the bridge to the stochastic side of the story:
Part III \cite{Unser2026c} identifies the generalized splines of this paper as the
minimum-mean-square-error estimators of generalized Gaussian processes, which extends the
classical correspondence of Kimeldorf and Wahba \cite{Kimeldorf1970} to the present level of
generality.

\appendix
\section{Abstract Representer Theorem}
\label{App:RepTheorem}
In the interest of self-containedness, we prove the following classical result, known as the representer theorem: The solutions to minimum-norm problems in Hilbert spaces subject to finitely many linear constraints lie in the finite-dimensional subspace spanned by the Riesz representers of the constraint functionals.

\begin{proof}[Proof of Theorem  \ref{Theo:MinNorm}]
Every $\nu_m \in \Spc H'$ has a unique Riesz representer
$\nu_m^\ast=\Op J_{\Spc H'}\{\nu_m\} \in \Spc H$, which satisfies
$\langle \nu_{m'}, \nu^\ast_m\rangle=\langle \nu_{m'}^\ast,\nu_m^\ast \rangle_{\Spc H}
=\langle \nu_{m'},\nu_m \rangle_{\Spc H'}=[\M G]_{m',m}$, where $\M G \in \R^{M \times M}$ is
the {\em Gram matrix} of the system. The assumption of linear independence ensures that $\M G$
is positive-definite and hence invertible.

\medskip
\emph{Lagrangian and duality}:
By introducing the multipliers $\V\lambda=(\lambda_m) \in \R^M$, we write the Lagrangian of
\eqref{Eq:Ginterpol} as
\begin{align*}
\mathcal L(f,\V\lambda)
&= \frac12 \|f\|_{\Spc H}^2
+ \sum_{m=1}^M \lambda_m \big( z_m - \langle \nu_m , f \rangle \big)\\
&= \frac12
   \Big\| f - \sum_{m=1}^M \lambda_m \nu_m^\ast \Big\|_{\Spc H}^2
   - \frac12
   \Big\| \sum_{m=1}^M \lambda_m \nu_m^\ast \Big\|_{\Spc H}^2
   + \V\lambda^\Tr \M z,
\end{align*}
where we have used the defining property
$\langle \nu_m, f\rangle=\langle \nu^\ast_m, f\rangle_{\Spc H}$ of the Riesz representers and
completed the square. The infimum over $f \in \Spc H$ is attained at
$f=\sum_{m=1}^M \lambda_m \nu_m^\ast$, which identifies the dual function as
\begin{align*}
\inf_{f\in\Spc H} \mathcal L(f,\V\lambda)
= \V\lambda^\Tr \M z
  - \frac12
    \Big\| \sum_{m=1}^M \lambda_m \nu_m \Big\|_{\Spc H'}^2
=\V\lambda^\Tr \M z-\tfrac{1}{2}\V \lambda^\Tr \M G \V\lambda .
\end{align*}
This is the criterion of \eqref{Eq:DualProb}, which proves the claim of duality. The quadratic
form is strictly concave, and its (unique) maximum is achieved for $\M a=\M G^{-1} \M z$.

\medskip
\emph{Resolution of the primal problem \eqref{Eq:Ginterpol}}:
Let $g_\M a=\sum_{m=1}^M a_m \nu_m^\ast$ with $\M a=\M G^{-1} \M z$. Since
$\langle \nu_{m'},g_\M a\rangle=\sum_{m=1}^M a_m [\M G]_{m',m}=z_{m'}$ for all $m'$, the
feasibility set $\Spc A_\V z \eqdef \{ f \in \Spc H : \V \nu(f)=\M z \}$ is non-empty. It is
the affine set $\Spc A_\V z=g_\M a + \Spc H_0^\perp$, where
$\Spc H_0={\rm span}\{\nu^\ast_m\}_{m=1}^M$ and
$\Spc H_0^\perp=\{f\in \Spc H: \langle \nu_m, f\rangle=0,\ m=1,\dots,M\}$. Any feasible
$f=g_\M a+h$ with $h \in \Spc H_0^\perp$ then satisfies the Pythagorean relation
\begin{align*}
\|f\|^2_{\Spc H}= \|g_\M a\|^2_{\Spc H} + \|f-g_\M a\|^2_{\Spc H},
\end{align*}
which proves that $f_0=g_\M a$ is the unique minimum-norm solution, with
$\|f_0\|^2_{\Spc H}=\M a^\Tr \M G \M a=\M z^\Tr \M G^{-1} \M z$.
\end{proof}
\section{Decomposition of the Measurement Space}
\label{App:MeasurementSpace}
We first prove Lemma \ref{Prop:DirectSum} and then present a practical solution to the design of a measurement-adapted decomposition.
\begin{proof}[Proof of Lemma \ref{Prop:DirectSum}]
Since $\M P$ is of full column rank $N_0$, there necessarily exists some complementary matrix $\M Q\in \R^{M \times (M-N_0)}$
such that  $[\M Q \ \ \M P] \in \R^{M \times M}$ is invertible. We then set
$\left[
\begin{array}{c}
 \widetilde{\M Q}^\Tr\\
    \widetilde{\M P}^\Tr
 \end{array}
\right]=[\M Q \ \ \M P]^{-1}$ and expand \eqref{Eq:Biortho2} to get\begin{align}
\left[
\begin{array}{cc}
 \widetilde{\M Q}^\Tr\M Q & \widetilde{\M Q}^\Tr\M P \\
    \widetilde{\M P}^\Tr\M Q &  \widetilde{\M P}^\Tr\M P
 \end{array}
\right] = \left[\begin{array}{cc}
\M I_{M-N_0} & \M 0 \\
\M 0 &  \M I_{N_0}
 \end{array}
\right].
\label{Eq:InverseBiortho}
\end{align}
Next, we write \eqref{Eq:ReducedBasis} and \eqref{Eq:ReducedBasis2} in matrix form as
\begin{align}
\label{Eq:directsumRM}
\left[
\begin{array}{c}
\widetilde{\V \nu}  \\
\widetilde{\V \phi}  
 \end{array}
\right] = \left[
\begin{array}{c}
 \widetilde{\M Q}^\Tr\\
    \widetilde{\M P}^\Tr
 \end{array}
\right] \V \nu.
\end{align}
This is a linear system that can be inverted to yield
\begin{align}
\V \nu
= \left[
\begin{array}{c}
 \widetilde{\M Q}^\Tr\\
    \widetilde{\M P}^\Tr
 \end{array}
\right]^{-1} \left[
\begin{array}{c}
\widetilde{\V \nu}  \\
\widetilde{\V \phi}  
 \end{array}
\right] =
 \left[
\begin{array}{cc}
 {\M Q}  &  {\M P} 
 \end{array}
\right] \left[
\begin{array}{c}
\widetilde{\V \nu}  \\
\widetilde{\V \phi}  
 \end{array}
\right],
\end{align}
which proves  \eqref{Eq:Decompnu}.
Based on these definitions and on \eqref{Eq:InverseBiortho}, we get $\widetilde{\V \nu}(\V p)=\widetilde{\M Q}^\Tr\V \nu(\V p)=\widetilde{\M Q}^\Tr\M P=\M 0$ ($\V p$-orthogonality) and
$\widetilde{\V \phi}(\V p)=\widetilde{\M P}^\Tr\V \nu(\V p)=\widetilde{\M P}^\Tr\M P=\M I_{N_0}$ (biorthogonality). Likewise, we have that
$ \widetilde{\M Q}^\Tr\M Q=\M Q^\Tr \widetilde{\M Q}=\M I_{M-N_0}$ and 
$ \M P^\Tr \widetilde{\M P}=(\widetilde{\M P}^\Tr\M P)^\Tr=\M I_{N_0}$.

\end{proof}

Our proposed change of basis
 is to take
\begin{align}
\label{Eq:CanonicalBasis}
\widetilde{\M P}=\M P(\M P^\Tr\M P)^{-1},
\end{align}
which satisfies the biorthonormality condition $\widetilde{\M P}^\Tr \M P = \M I_{N_0}$ and has
the distinguishing property that ${\rm range}(\widetilde{\M P})={\rm range}(\M P)$.
For $\M Q$,
we take any matrix whose columns form an orthonormal basis of the null space of $\M P^\Tr$
(e.g., computed from the singular value or QR decomposition of $\M P$). Then,
$\M Q^\Tr\M P=\M 0$ and $\M Q^\Tr\M Q=\M I_{M-N_0}$. Moreover, we also have that
$\widetilde{\M P}^\Tr\M Q=(\M P^\Tr\M P)^{-1}\M P^\Tr\M Q=\M 0$.
Hence, the four conditions of \eqref{Eq:InverseBiortho} are met
with $\widetilde{\M Q}=\M Q$, so that no correction of $\M Q$ is required.

This choice is canonical in the sense that it completely decouples the two components of
the solution (see Section \ref{Sec:Discretization}):  Since
$\M P\widetilde{\M P}^\Tr=\M P(\M P^\Tr\M P)^{-1}\M P^\Tr$ and
$\M Q\M Q^\Tr=\M I_M-\M P(\M P^\Tr\M P)^{-1}\M P^\Tr$ are the orthogonal projectors onto
${\rm range}(\M P)$ and its orthogonal complement, respectively, it realizes the orthogonal
splitting of the measurement space as \begin{align}
\R^M
=
{\rm range}(\M P)\oplus{\rm range}(\M P)^\perp. 
\end{align}
In this configuration, the
component $\widetilde{\V \phi}=\widetilde{\M P}^\Tr\V \nu$ of \eqref{Eq:ReducedBasis2} carries
the least-squares fit of the data onto $\Spc N_\V p$, while $\widetilde{\V \nu}=\M Q^\Tr\V \nu$
carries the residual measurements. This is precisely the decoupling exploited in
Section \ref{Sec:InvLS}.

\subsection*{Acknowledgment}
The author used Claude (Anthropic) as an interactive assistant during the preparation of
this paper, for proofreading, for checking and simplifying proofs. All results were verified by the author,
who takes full responsibility for the content.
This research was supported by the Swiss National Science Foundation (SNSF) under Grant 200020\_219356.

\bibliographystyle{unsrtinitials}
\bibliography{SplineOps}

@Article{deBoor1966,
  author  = {de Boor, Carl and Lynch, Robert E.},
  title   = {On Splines and Their Minimum Properties},
  journal = {Journal of Mathematics and Mechanics},
  year    = {1966},
  volume  = {15},
  number  = {6},
  pages   = {953--969},
}

@Article{Jerome1969,
  author  = {Jerome, Joseph W. and Schumaker, Larry L.},
  title   = {On {L}g-splines},
  journal = {Journal of Approximation Theory},
  year    = {1969},
  volume  = {2},
  number  = {1},
  pages   = {29--49},
  doi     = {10.1016/0021-9045(69)90029-X},
}

@Book{Prenter:1975,
  author    = {Prenter, Paddy M.},
  title     = {Splines and Variational Methods},
  publisher = {Wiley},
  address   = {New York},
  year      = {1975},
}

@Book{Schumaker2007,
  author    = {Schumaker, Larry L.},
  title     = {Spline Functions: {B}asic Theory},
  publisher = {Cambridge University Press},
  address   = {Cambridge},
  year      = {2007},
  edition   = {3rd},
  series    = {Cambridge Mathematical Library},
  doi       = {10.1017/CBO9780511618994},
}

@Article{Schoenberg1964,
  author  = {Schoenberg, Isaac J.},
  title   = {Spline functions and the problem of graduation},
  journal = {Proceedings of the National Academy of Sciences of the USA},
  year    = {1964},
  volume  = {52},
  number  = {4},
  pages   = {947--950},
  doi     = {10.1073/pnas.52.4.947},
}

@InCollection{Duchon1977,
  author    = {Duchon, Jean},
  title     = {Splines minimizing rotation-invariant semi-norms in {S}obolev spaces},
  booktitle = {Constructive Theory of Functions of Several Variables},
  editor    = {Schempp, Walter and Zeller, Karl},
  series    = {Lecture Notes in Mathematics},
  volume    = {571},
  pages     = {85--100},
  publisher = {Springer},
  address   = {Berlin},
  year      = {1977},
  doi       = {10.1007/BFb0086566},
}

@InCollection{Schaback1999,
  author    = {Schaback, Robert},
  title     = {Native {H}ilbert spaces for radial basis functions {I}},
  booktitle = {New Developments in Approximation Theory},
  editor    = {Buhmann, Martin D. and Mache, Detlef H. and Felten, Michael
               and M{\"u}ller, Manfred W.},
  series    = {International Series of Numerical Mathematics},
  volume    = {132},
  pages     = {255--282},
  publisher = {Birkh{\"a}user},
  address   = {Basel},
  year      = {1999},
  doi       = {10.1007/978-3-0348-8696-3_16},
}

@Article{Schaback2000,
  author  = {Schaback, Robert},
  title   = {A unified theory of radial basis functions: {N}ative {H}ilbert
             spaces for radial basis functions {II}},
  journal = {Journal of Computational and Applied Mathematics},
  year    = {2000},
  volume  = {121},
  number  = {1--2},
  pages   = {165--177},
  doi     = {10.1016/S0377-0427(00)00345-9},
}

@Article{Aronszajn1950,
  author  = {Aronszajn, Nachman},
  title   = {Theory of reproducing kernels},
  journal = {Transactions of the American Mathematical Society},
  year    = {1950},
  volume  = {68},
  number  = {3},
  pages   = {337--404},
  doi     = {10.1090/S0002-9947-1950-0051437-7},
}

@Article{Micchelli1986,
  author  = {Micchelli, Charles A.},
  title   = {Interpolation of scattered data: {D}istance matrices and
             conditionally positive definite functions},
  journal = {Constructive Approximation},
  year    = {1986},
  volume  = {2},
  number  = {1},
  pages   = {11--22},
  doi     = {10.1007/BF01893414},
}

@Book{Wendland2005,
  author    = {Wendland, Holger},
  title     = {Scattered Data Approximation},
  series    = {Cambridge Monographs on Applied and Computational Mathematics},
  volume    = {17},
  publisher = {Cambridge University Press},
  address   = {Cambridge},
  year      = {2005},
  doi       = {10.1017/CBO9780511617539},
}

@Article{Kanagawa2018,
  author  = {Kanagawa, Motonobu and Hennig, Philipp and
             Sejdinovic, Dino and Sriperumbudur, Bharath K.},
  title   = {{G}aussian processes and kernel methods: {A} review on
             connections and equivalences},
  journal = {arXiv preprint arXiv:1807.02582},
  year    = {2018},
  doi     = {10.48550/arXiv.1807.02582},
}

@Article{Grothendieck1955,
  author  = {Grothendieck, Alexander},
  title   = {Produits tensoriels topologiques et espaces nucl{\'e}aires},
  journal = {Memoirs of the American Mathematical Society},
  year    = {1955},
  volume  = {16},
  pages   = {1--140},
  doi     = {10.1090/memo/0016},
}

@Book{Pietsch1972,
  author    = {Pietsch, Albrecht},
  title     = {Nuclear Locally Convex Spaces},
  series    = {Ergebnisse der Mathematik und ihrer Grenzgebiete},
  volume    = {66},
  publisher = {Springer},
  address   = {Berlin},
  year      = {1972},
  pages     = {ix+193},
}

@Book{Treves2006,
  author    = {Tr{\`e}ves, Fran{\c{c}}ois},
  title     = {Topological Vector Spaces, Distributions and Kernels},
  publisher = {Dover Publications},
  address   = {Mineola, NY},
  year      = {2006},
  note      = {Unabridged republication of the 1967 Academic Press edition},
}

@Unpublished{Unser2026a,
  author = {Unser, M.},
  note   = {Preprint, \'Ecole polytechnique f\'ed\'erale de Lausanne (EPFL)},
  title  = {Splines and Operators: {P}art {I}---{A}dmissibility and Native Spaces},
  year   = {2026},
}

@BOOK{Berlinet2004,
  title = {Reproducing Kernel Hilbert Spaces in Probability and Statistics},
  publisher = {Kluwer Academic Boston},
  year = {2004},
  author = {Berlinet, Alain and Thomas-Agnan, Christine},
  volume = {3},
}

@BOOK{Ekeland1999,
  title = {Convex Analysis and Variational Problems},
  publisher = {{SIAM}},
  year = {1999},
  author = {Ekeland, Ivar and Temam, Roger},
  volume = {28},
  series = {Classics in Applied Mathematics}
}

@Article{Unser2000a,
  author    = {Unser, M. and Blu, T.},
  journal   = {{SIAM} Review},
  title     = {Fractional Splines and Wavelets},
  year      = {2000},
  month     = mar,
  number    = {1},
  pages     = {43--67},
  volume    = {42},
}

@Article{Unser2022,
  author    = {Michael Unser and Shayan Aziznejad},
  journal   = {Applied and Computational Harmonic Analysis},
  title     = {Convex optimization in sums of {B}anach spaces},
  year      = {2022},
  month     = jan,
  pages     = {1--25},
  volume    = {56},
  doi       = {10.1016/j.acha.2021.07.002},
  publisher = {Elsevier {BV}},
}

@Book{Wahba1990,
  author    = {Wahba, Grace},
  publisher = {Society for Industrial and Applied Mathematics},
  title     = {Spline Models for Observational Data},
  year      = {1990},
  address   = {Philadelphia},
}

@Unpublished{Unser2026c,
  author = {Unser, M.},
  note   = {Preprint, \'Ecole polytechnique f\'ed\'erale de Lausanne (EPFL)},
  title  = {Variational Splines and Generalized Gaussian Processes},
  year   = {2026},
}

@Article{Kimeldorf1970,
  author    = {Kimeldorf, G. and Wahba, G.},
  journal   = {The Annals of Mathematical Statistics},
  title     = {A correspondence between {B}ayesian estimation on stochastic processes and smoothing by splines},
  year      = {1970},
  number    = {2},
  pages     = {495--502},
  volume    = {41},
}

@Article{Kimeldorf1971,
  author  = {George Kimeldorf and Grace Wahba},
  journal = {Journal of Mathematical Analysis and Applications},
  title   = {Some results on {T}chebycheffian spline functions},
  year    = {1971},
  number  = {1},
  pages   = {82--95},
  volume  = {33},
}

@Article{Parzen1961,
  author    = {Parzen, Emanuel},
  journal   = {The Annals of Mathematical Statistics},
  title     = {An Approach to Time Series Analysis},
  year      = {1961},
  issn      = {0003-4851},
  month     = dec,
  number    = {4},
  pages     = {951--989},
  volume    = {32},
  doi       = {10.1214/aoms/1177704840},
  publisher = {Institute of Mathematical Statistics},
}

@Article{Kailath1970,
  author  = {Kailath, T.},
  journal = {Proceedings of the IEEE},
  title   = {The innovations approach to detection and estimation theory},
  year    = {1970},
  issn    = {0018-9219},
  month   = may,
  number  = {5},
  pages   = {680--695},
  volume  = {58},
  doi     = {10.1109/PROC.1970.7723},
}

@Article{Weinert1974,
  author    = {Weinert, H. L. and Kailath, T.},
  journal   = {Annals of Statistics},
  title     = {Stochastic Interpretations and Recursive Algorithms for Spline Functions},
  year      = {1974},
  number    = {4},
  pages     = {787--794},
  volume    = {2},
}

@INPROCEEDINGS{Scholkopf2001,
  author = {Sch{\"o}lkopf, Bernhard and Herbrich, Ralf and Smola, Alex J.},
  title = {A Generalized Representer Theorem},
  booktitle = {Computational Learning Theory},
  year = {2001},
  editor = {Helmbold, David and Williamson, Bob},
  pages = {416--426},
  publisher = {Springer Berlin Heidelberg}
}

@Article{Hofmann2008,
  author   = {Hofmann, Thomas and Sch\"olkopf, Bernhart and Smola, Alexander J.},
  journal  = {Annals of Statistics},
  title    = {Kernel methods in machine learning},
  year     = {2008},
  number   = {3},
  pages    = {1171--1220},
  volume   = {36},
}

@ARTICLE{Hoerl1970,
  author = {Hoerl, A. E. and Kennard, R. W.},
  title = {Ridge regression: Biased estimation for nonorthogonal problems},
  journal = {Technometrics},
  year = {1970},
  volume = {12},
  pages = {55--67},
  number = {1}
}

@BOOK{Tikhonov1977,
  title = {Solution of Ill-posed Problems},
  publisher = {Winston \& Sons},
  year = {1977},
  author = {Tikhonov, A. N. and Arsenin, V. Y.},
  address = {Washington},
}

@Article{Gupta2018,
  author  = {Gupta, Harshit and Fageot, Julien and Unser, Michael},
  journal = {{IEEE} Transactions on Signal Processing},
  title   = {Continuous-Domain Solutions of Linear Inverse Problems with {T}ikhonov {\textit{versus}} Generalized {TV} Regularization},
  year    = {2018},
  month   = sep,
  number  = {17},
  pages   = {4670--4684},
  volume  = {66},
}

@Article{Badoual2018,
  author  = {Badoual, A. and Fageot, J. and Unser, M.},
  journal = {{IEEE} Transactions on Signal Processing},
  title   = {Periodic Splines and {G}aussian Processes for the Resolution of Linear Inverse Problems},
  year    = {2018},
  month   = nov,
  number  = {22},
  pages   = {6047--6061},
  volume  = {66},
}

@Article{Unser_2020,
  author    = {Michael Unser},
  journal   = {Foundations of Computational Mathematics},
  title     = {A Unifying Representer Theorem for Inverse Problems and Machine Learning},
  year      = {2021},
  month     = sep,
  number    = {4},
  pages     = {941--960},
  volume    = {21},
  doi       = {10.1007/s10208-020-09472-x},
}

@BOOK{Unser2014book,
  title = {An Introduction to Sparse Stochastic Processes},
  publisher = {Cambridge University Press},
  year = {2014},
  author = {Unser, M. and Tafti, P. D.},
}

@Article{Boyer2018,
  author  = {Boyer, Claire and Chambolle, Antonin and De Castro, Yohann and Duval, Vincent and De Gournay, Fr{\'e}d{\'e}ric and Weiss, Pierre},
  journal = {SIAM Journal of Optimization},
  title   = {On Representer Theorems and Convex Regularization},
  year    = {2019},
  number  = {2},
  pages   = {1260--1281},
  volume  = {29},
}

@Article{Parhi2021b,
  author    = {Rahul Parhi and Robert D Nowak},
  journal   = {Journal of Machine Learning Research},
  title     = {Banach Space Representer Theorems for Neural Networks and Ridge Splines},
  year      = {2021},
  number    = {43},
  pages     = {1--40},
  volume    = {22},
}

@Article{Bredies2018,
  author  = {Bredies, Kristian and Carioni, Marcello},
  journal = {Calculus of Variations and Partial Differential Equations},
  title   = {Sparsity of solutions for variational inverse problems with finite-dimensional data},
  year    = {2020},
  number  = {14},
  pages   = {26},
  volume  = {59},
}

@Article{Bartolucci2021,
  author        = {Francesca Bartolucci and Ernesto {De Vito} and Lorenzo Rosasco and Stefano Vigogna},
  journal       = {Applied and Computational Harmonic Analysis},
  title         = {Understanding neural networks with reproducing kernel {B}anach spaces},
  year          = {2023},
  pages         = {194--236},
  volume        = {62},
  archiveprefix = {arXiv},
  eprint        = {2109.09710},
  primaryclass  = {stat.ML},
}

@Article{Anselone1968,
  author  = {Anselone, P. M. and Laurent, P. J.},
  journal = {Numerische Mathematik},
  title   = {A General Method for the Construction of Interpolating or Smoothing Spline-Functions},
  year    = {1968},
  volume  = {12},
  pages   = {66--82},
}

@Book{Atteia1992,
  author    = {Atteia, M.},
  publisher = {North-Holland},
  title     = {Hilbertian Kernels and Spline Functions},
  year      = {1992},
  address   = {Amsterdam},
  series    = {Studies in Computational Mathematics},
  volume    = {4},
}

@Article{Fasshauer2011,
  author  = {Fasshauer, G. E. and Ye, Q.},
  journal = {Numerische Mathematik},
  title   = {Reproducing Kernels of Generalized {S}obolev Spaces via a {G}reen Function Approach with Distributional Operators},
  year    = {2011},
  volume  = {119},
  number  = {3},
  pages   = {585--611},
}

@Article{Scheuerer2013,
  author  = {Scheuerer, M. and Schaback, R. and Schlather, M.},
  journal = {European Journal of Applied Mathematics},
  title   = {Interpolation of Spatial Data---{A} Stochastic or a Deterministic Problem?},
  year    = {2013},
  volume  = {24},
  number  = {4},
  pages   = {601--629},
}

\end{document}